    \newcommand{\BC}{{\mathbb {C}}} 
     \newcommand{\BF}{{\mathbb {F}}}
     \newcommand{\BH}{{\mathbb {H}}}
    \newcommand{\BQ}{{\mathbb {Q}}} \newcommand{\BR}{{\mathbb {R}}}
     \newcommand{\BZ}{{\mathbb {Z}}}
     \newcommand{\CF}{{\mathcal {F}}}
     \newcommand{\CL}{{\mathcal {L}}}
    \newcommand{\CO}{{\mathcal {O}}}
     \newcommand{\CX}{{\mathcal {X}}}
    \newcommand{\Ar}{{\mathrm{Ar}}}
    \newcommand{\coker}{{\mathrm{coker}}}
    \renewcommand{\Im}{{\mathrm{Im}}}
     \newcommand{\rank}{{\mathrm{rank}}}
     \newcommand{\Pic}{\mathrm{Pic}}
    \newcommand{\PSL}{{\mathrm{PSL}}}
    \newcommand{\covol}{{\mathrm{covol}}}
    \newcommand{\Spec}{{\mathrm{Spec}}}
    \newcommand{\vol}{{\mathrm{vol}}}
    \newcommand{\wh}{\widehat}
    \newcommand{\pair}[1]{\langle {#1} \rangle}
    \newcommand{\ov}{\overline}
    \newcommand{\sk}{\medskip}
    \newcommand{\ra}{\rightarrow}
    \newcommand{\s}{\sk\noindent}
    \theoremstyle{plain}
    \newtheorem{thm}{Theorem}[section] 
    \newtheorem{lem}[thm]{Lemma}  \newtheorem{prop}[thm]{Proposition}\newtheorem{def-prop}[thm]{Definition and Proposition}
     \newtheorem{defn}[thm]{Definition}
\theoremstyle{remark} \newtheorem{remark}[thm]{Remark}
\theoremstyle{remark} 
\theoremstyle{remark} 
    \numberwithin{equation}{section}
\begin{document}

\title{Numerical cohomology for arithmetic surfaces and applications}
\author{Wei He}
\address{School of Mathematics and Statistics, Xi'an Jiaotong University, Xi'an 710049, P.R. China.} 
\email{hewei0714@xjtu.edu.cn}
\begin{abstract}In this paper, we introduce numerical cohomology for arithmetic surfaces, which leads to an absolute version of arithmetic Riemann-Roch formula. As an application, we derive an upper bound for the self-intersection number of relative dualizing sheaf in terms of successive minima with respect to $L^2$-norm. The result has the geometric analogue that the slopes of the Harder-Narasimhan filtration of relative dualizing sheaf provide an upper bound for self-intersection number. Suppose that the arithmetic surface admits a section and has generic fiber of genus at least two, we obtain a refined upper bound for the self-intersection number, which is governed by the topological and arithmetic information of the section.\end{abstract}
\maketitle

\tableofcontents

\section{Introduction}\label{sec1}
Let $f: \CX\ra \Spec\CO$ be an arithmetic scheme, where $\CO$ is the ring of integers of a number field $F$. Consider a Hermitian coherent sheaf ${\CF}$ on $\CX$.
One arithmetic analogue of $h^0$ was defined by logarithm of the cardinality of small sections
\[h^0(\CF)=\log\#\{x\in H^0(\CX,\CF)\ |\ ||x||_{v}\leq 1,\quad \forall v|\infty\},\]which motivated by geometry of numbers. For $\CX=\Spec\CO$, this definition essentially appeared in Weil's note \cite{Weil:1939}. 

Such definition may not be so satisfactory since it does not behave like traditional geometric analogues. There is another arithmetic analogue of $h^0$ for arithmetic curves, which was defined by the logarithm of theta invariant\footnote{it was also called Gauss mass}
\[h_{\CO}^0(\CF)=\log \sum_{x\in H^0(\CX,\CF)}e^{-\pi ||x||^2}.\]
The origin of the definition of $h_\CO^0$ appeared as one side of Poisson summation formula, may be dated back to Artin, Hasse, Tate \cite[Sec.~4.2]{Tate:67}, and Quillen \cite{Quillen}. See \cite[Sec.~0.1.2]{Bost:2020} for more details and \cite{Groenewegen}, \cite{Geer-Schoof}, \cite{Bost:2020} for recent developments.

Let $\chi_\CO(\CF)=h^0_\CO(\CF)-h^0_\CO(\omega_{\CO}\otimes\CF^\vee)$ be the Euler characteristic of $\CF$.
As in the characteristic $p$ geometric case, the Riemann-Roch formula 
\[\chi_\CO(\CF)=\wh{\deg}\det\CF+\chi_\CO(\CO)\rank_{\CO}\CF,\quad \] is equivalent to Poisson summation formula. Furthermore $h_{\CO}^0(\CF)$ is closely related to $h^0(\CF)$ but has better properties \cite[Preface~\&~Sec.~3.1]{Bost:2020}. 

Now consider $\CX$ to be an arithmetic surface and $\CL$ to be a Hermitian line bundle on $\CX$. Arakelov, in his original paper \cite{Arakelov:1974}, introduced a version of $h^0$ that uses a `norm' on $H^0(\CX, L)$, which is closely related to archimedean intersection. Then he made a conjectural arithmetic Riemann-Roch formula and a formula for the residue of height zeta function. 

Later, the arithmetic Riemann-Roch was proved by Faltings \cite{Faltings} with arithmetic analogue of Euler characteristic defined by arithmetic degree on $\det Rf_*\CL$ with Faltings metric. There are also arithmetic Riemann-Roch for arithmetic surfaces proved by Deligne \cite{Deligne:85} with metric on $\det Rf_*\CL$ to be Quillen metric, which has a generalization to higher dimensional arithmetic schemes by Gillet-Soul\'{e} \cite{Gillet-Soule}.

For arithmetic surfaces, it is natural to seek a good notation for the numerical cohomology $h^i$ ($i=0,1,2$), analogous to the one used in the geometric setting. Also see \cite[Conj.~2.1.1]{Zhou2} on a conjecture of Bost on positivity of $H^1$ and \cite[Sec.~9]{Geer-Schoof} for a guess of van der Geer and Schoof for $h^0$ and $h^2$ which is close to Arakelov's one. A similar conjecture for numerical cohomology of arithmetic surfaces was also proposed in \cite{Borisov}.

In this paper, we introduce a numerical cohomology \[h^i_{\CX}(\CL),\quad  i=1,2,3\] for a Hermitian line bundle $\CL$ on $\CX$. The motivation is the Leray spectral sequence and Serre duality. Same as the geometric case, which allows us to define numerical cohomology of Hermitian line bundles on $\CX$ via numerical cohomology $h_\CO^0$ of Hermitian coherent $\CO$-modules. As a result, we obtain the following absolute arithmetic Riemann-Roch for numerical cohomolgy, which deduced from Arithmetic Riemann-Roch of Faltings, Deligne and Gillet-Soul\'{e}.

In the following, let $\CL$ be a Hermitian line bundle on $\CX$. Fix the Arakelov metric on relative dualizing sheaf $\omega_{\CX/\CO}$. There is a natural Hermitian structure on $H^0(\CX,\CL)$ given by the induced $L^2$ metric. Let $\omega_{\CX}=\omega_{\CX/\BZ}$, which we may call canonical sheaf.
Motivated by the arithmetic curve case and Serre duality for algebraic surfaces, define
\[h^0_{\CX}(\CL):=h^0_{\CO}(f_*\CL).\] and \[h^2_{\CX}(\CL):=h^0_{\CX}(\omega_{\CX}\otimes \CL^\vee).\]
Inspired by Leray spectral sequence: \[E_2^{p,q}:=H^p(\CO,R^qf_* \CL)\Rightarrow H^{p+q}(\CX,\CL),\] and relative Serre duality, define 
 \[\begin{aligned}h_\CX^1(\CL):=&h_\CO^0(\omega_\CO\otimes (f_*\CL)^\vee)+h_\CO^0(f_*(\omega_{\CX/\CO}\otimes\CL^\vee)^\vee)\\
  +&\frac{1}{2}(\wh{\deg}\det H^1(\CX,\CL)_t+\wh{\deg}\det H^1(\CX,\omega_\CX\otimes\CL^{\vee})_t),\\
\end{aligned}\]where $H^1(\CX,\CL)_t\subset H^1(\CX,\CL)$ is the torsion $\CO$-submodule. Then $h_\CX^1(\CL)=h^0_{\CX}(\omega_{\CX}\otimes\CL^\vee)$.

Now define the Euler characteristic by \[\chi_{\CX}(\CL)=h^0_{\CX}(\CL)-h^1_{\CX}(\CL)+h^2_{\CX}(\CL),\] then it satisfies the following Riemann-Roch:
For each infinite place $v$, let $\det \Delta_{\CL_v}=e^{-\zeta_{\CL_v}(0)}$ be the regularized determinant of Laplacian (analytic torsion) of Hermitian line bundle $\CL_v$.  Let $\det \Delta_{\CL_\infty}=\prod_{v|\infty}\det \Delta_{\CL_v}$.

\begin{thm}\label{main1}Let $\CL$ be a Hermitian line bundle on an arithmetic surface $\CX$ and let $\omega_{\CX}$ be the canonical sheaf equipped with the Arakelov metric, then
\[\left(\chi_\CX(\CL)+\frac{1}{2}\log \det \Delta_{\CL_\infty}\right)=\frac{1}{2}(\CL,\CL\otimes \omega_{\CX}^\vee)+\left(\chi_\CX(\CO_{\CX})+\frac{1}{2}\log \det\Delta_{\CO_{\CX_\infty}}\right),\]where $(\ ,\ )$ is the Arakelov intersection pairing on $\wh{\Pic}(\CX)$.
\end{thm}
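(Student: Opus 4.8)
The plan is to show that the numerical Euler characteristic $\chi_\CX(\CL)$ introduced above agrees, up to a constant depending only on $F$ and on the generic fibre, with the arithmetic degree of $\det Rf_*\CL$ for the natural $L^2$ metric, and then to feed this into the arithmetic Riemann--Roch theorem of Faltings, Deligne and Gillet--Soul\'{e} for $f$, which evaluates that arithmetic degree by the Arakelov intersection pairing. The analytic torsion $\det\Delta_{\CL_\infty}$ enters exactly as the discrepancy between the $L^2$ metric (the metric implicitly built into the $h^i_\CX$'s through the $L^2$ metric on $H^0(\CX,\CL)$, via the Arakelov metric on $\omega_{\CX/\CO}$) and the Quillen metric used in arithmetic Riemann--Roch; the constant $\chi_\CO(\CO)\chi(\CL_\eta)$, where $\CL_\eta$ is the restriction to the generic fibre, will cancel against the analogous constant for $\CO_\CX$ once $\omega_{\CX/\CO}$ is rewritten in terms of $\omega_\CX=\omega_{\CX/\BZ}$.

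\textbf{Step 1 (reduction to $\det Rf_*$ over the base).} Put $N:=f_*(\omega_{\CX/\CO}\otimes\CL^\vee)$, a locally free $\CO$-module with its $L^2$ metric. Relative Grothendieck--Serre duality on $\CX/\CO$ identifies $Rf_*\CL$ with the $\CO$-dual of $Rf_*(\omega_{\CX/\CO}\otimes\CL^\vee)[1]$; this gives $R^1f_*\CL/H^1(\CX,\CL)_t\cong N^\vee$ and also shows that the torsion of $R^1f_*(\omega_{\CX/\CO}\otimes\CL^\vee)$ --- hence, after twisting by the line bundle $\omega_\CO$, that of $R^1f_*(\omega_\CX\otimes\CL^\vee)$ --- has the same length as $H^1(\CX,\CL)_t$. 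The Knudsen--Mumford determinant then yields $\wh{\deg}\det Rf_*\CL=\wh{\deg}\det f_*\CL+\wh{\deg}\det N-\log\#H^1(\CX,\CL)_t$ for the induced $L^2$ metric. On the other hand, substituting $h^0_\CX(\CL)=h^0_\CO(f_*\CL)$, $h^2_\CX(\CL)=h^0_\CX(\omega_\CX\otimes\CL^\vee)=h^0_\CO(\omega_\CO\otimes N)$ and the displayed definition of $h^1_\CX(\CL)$ into $\chi_\CX(\CL)=h^0_\CX-h^1_\CX+h^2_\CX$, the two torsion contributions to $h^1_\CX(\CL)$ add up (by the length equality just noted) to $\log\#H^1(\CX,\CL)_t$, and the arithmetic Riemann--Roch on the arithmetic curve $\Spec\CO$ --- i.e. Poisson summation, $\chi_\CO(\CF)=\wh{\deg}\det\CF+\chi_\CO(\CO)\rank_\CO\CF$ --- applied to the Serre-dual pairs $\{f_*\CL,\ \omega_\CO\otimes(f_*\CL)^\vee\}$ and $\{N^\vee,\ \omega_\CO\otimes N\}$ collapses the remaining four $h^0_\CO$-terms. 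Using $\rank_\CO f_*\CL-\rank_\CO N=h^0(\CL_\eta)-h^1(\CL_\eta)=\chi(\CL_\eta)$, one arrives at
\[\chi_\CX(\CL)=\wh{\deg}\bigl(\det Rf_*\CL,\ \|\cdot\|_{L^2}\bigr)+\chi_\CO(\CO)\,\chi(\CL_\eta).\]

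\textbf{Step 2 (analytic torsion and arithmetic Riemann--Roch).} At each $v\mid\infty$ the Quillen metric on $\det Rf_*\CL$ differs from the $L^2$ metric by the regularised determinant $\det\Delta_{\CL_v}$, so $\wh{\deg}(\det Rf_*\CL,\|\cdot\|_Q)$ and $\wh{\deg}(\det Rf_*\CL,\|\cdot\|_{L^2})$ differ by $\tfrac12\log\det\Delta_{\CL_\infty}$, and with the sign convention of the statement Step 1 becomes $\chi_\CX(\CL)+\tfrac12\log\det\Delta_{\CL_\infty}=\wh{\deg}(\det Rf_*\CL,\|\cdot\|_Q)+\chi_\CO(\CO)\chi(\CL_\eta)$, likewise for $\CO_\CX$. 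Now invoke arithmetic Riemann--Roch for $f$ in the form comparing $\CL$ with $\CO_\CX$ (for which the $R$-genus, depending only on $Tf=\omega_{\CX/\CO}^\vee$ and $\mathrm{rk}\,\CL$, cancels), namely $\wh{\deg}(\det Rf_*\CL,\|\cdot\|_Q)-\wh{\deg}(\det Rf_*\CO_\CX,\|\cdot\|_Q)=\tfrac12(\CL,\CL\otimes\omega_{\CX/\CO}^\vee)$. Subtracting the $\CO_\CX$-statement from the $\CL$-statement gives
\[\Bigl(\chi_\CX(\CL)+\tfrac12\log\det\Delta_{\CL_\infty}\Bigr)-\Bigl(\chi_\CX(\CO_\CX)+\tfrac12\log\det\Delta_{\CO_{\CX_\infty}}\Bigr)=\tfrac12\bigl(\CL,\CL\otimes\omega_{\CX/\CO}^\vee\bigr)+\chi_\CO(\CO)\deg\CL_\eta.\]
Finally, $\omega_\CX=\omega_{\CX/\CO}\otimes f^*\omega_\CO$, and the projection formula together with $\wh{\deg}\omega_\CO=-2\chi_\CO(\CO)$ (itself a consequence of the curve Riemann--Roch applied to $\omega_\CO$) gives $(\CL,f^*\omega_\CO)=\deg(\CL_\eta)\wh{\deg}\omega_\CO=-2\chi_\CO(\CO)\deg\CL_\eta$, whence $\tfrac12(\CL,\CL\otimes\omega_{\CX/\CO}^\vee)+\chi_\CO(\CO)\deg\CL_\eta=\tfrac12(\CL,\CL\otimes\omega_\CX^\vee)$, which is the claim.

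\textbf{Main obstacle.} The conceptual input is entirely the known arithmetic Riemann--Roch; the real work is verifying that the somewhat ad hoc definition of $h^1_\CX(\CL)$ --- in particular the symmetrised half-sum $\tfrac12(\wh{\deg}\det H^1(\CX,\CL)_t+\wh{\deg}\det H^1(\CX,\omega_\CX\otimes\CL^\vee)_t)$ --- is exactly what makes $\chi_\CX(\CL)$ equal $\wh{\deg}\det Rf_*\CL$ up to the expected constant. This is where one needs (i) relative Grothendieck--Serre duality, both to identify the free quotient of $R^1f_*\CL$ with $N^\vee$ and to see the two torsion modules have equal length, and (ii) the fact that relative Serre duality is an $L^2$-isometry, so that the $L^2$ metric on $H^0(\CX,\CL)$ together with the Arakelov metric on $\omega_{\CX/\CO}$ really does induce the $L^2$ metric on $\det Rf_*\CL$. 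The remaining delicate points are pure bookkeeping --- pinning down the sign with which $\det\Delta_{\CL_\infty}$ enters the Quillen-versus-$L^2$ comparison, and the base-change reconciliation of $\omega_{\CX/\CO}$ with $\omega_\CX$ that absorbs $\chi_\CO(\CO)\chi(\CL_\eta)$ --- but each is an easy place to be off by a sign or a factor, so I would carry out Steps 1 and 2 with all metrics and torsion degrees made fully explicit before citing arithmetic Riemann--Roch.
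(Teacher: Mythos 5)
Your proof is correct, and its overall architecture matches the paper's: reduce $\chi_\CX(\CL)$ to $\wh{\deg}\det_{L^2}Rf_*\CL$ plus a constant, pass from the $L^2$ metric to the Quillen metric via $\frac12\log\det\Delta$, and then quote the relative arithmetic Riemann--Roch formula \eqref{arrf}. The genuine divergence is in Step~1, which plays the role of Proposition~\ref{com}. The paper does \emph{not} prove that the two torsion terms $\wh{\deg}\det H^1(\CX,\CL)_t$ and $\wh{\deg}\det H^1(\CX,\omega_\CX\otimes\CL^\vee)_t$ are equal; instead it keeps both, obtains \eqref{eq1} with a residual torsion discrepancy, and then kills that discrepancy by exploiting the symmetry under $\CL\mapsto\omega_\CX\otimes\CL^\vee$ furnished by \eqref{arrf} together with $\det\Delta_\CL=\det\Delta_{\omega_\CX\otimes\CL^\vee}$ (this is \eqref{eq2}). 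Thus the paper's Proposition~\ref{com} itself already relies on arithmetic Riemann--Roch. You, by contrast, deduce the torsion equality directly from derived Grothendieck--Serre duality over $\Spec\CO$ (the identification of the torsion of $R^1f_*(\omega_{\CX/\CO}\otimes\CL^\vee)$ with $\mathrm{Ext}^1_\CO(H^1(\CX,\CL)_t,\CO)$, which has the same length as $H^1(\CX,\CL)_t$), so the half-sum in $h^1_\CX$ collapses to $\log\#H^1(\CX,\CL)_t$ and the curve-level Riemann--Roch \eqref{rrc} does the rest. This yields the normalization $\chi_\CX(\CL)=\wh{\deg}\det_{L^2}Rf_*\CL+\chi_\CO(\CO)\chi(\CL_\eta)$, which agrees with Proposition~\ref{com} after computing $(\CL,f^*\omega_\CO)$ and $(\omega_\CX,f^*\omega_\CO)$ by the projection formula and $\wh{\deg}\omega_\CO=-2\chi_\CO(\CO)$, exactly as you do in Step~2. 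Both routes are sound: yours isolates the duality input more sharply and makes Step~1 independent of arithmetic Riemann--Roch; the paper's uses only $\Hom$-level Serre duality and trades the $\mathrm{Ext}^1$ bookkeeping for a second appeal to \eqref{arrf}. Your ``Main obstacle'' correctly flags the one nontrivial analytic input --- that the $L^2$ Serre-duality isomorphism \eqref{sd} is an isometry, so the $L^2$ metric on $H^1(\CX,\CL)_f\cong N^\vee$ is the one dual to the $L^2$ metric on $N$ --- which the paper recalls from Deligne.
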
 

\begin{remark}
In the geometric case, there exists a purely harmonic analysis proof of Riemann-Roch theorem for algebraic surfaces defined over a finite field by Osipov-Parshin \cite{Osipov-Parshin:2011I}. It is interesting to ask a purely harmonic analysis proof of Theorem \ref{main1}, see \cite{Parshin:2011}, \cite{Osipov-Parshin:2011II} for related development.
\end{remark}

The remaining part of the introduction is devoted to an application of the numerical cohomology for arithmetic surfaces defined as above.

 Our second result is an upper bound of \[(\omega_{\CX/\CO},\omega_{\CX/\CO})\] in terms the successive minima of $f_*\omega_{\CX/\CO}$, the genus $g$ of $\CX_F$ and arithmetic invariants of $F$.
\begin{thm}\label{main:int2}
Let $\lambda_i$, $i=1,\cdots, g$ be the successive minima of $f_*\omega_{\CX/\CO}$ with repsect to $L^2$-norm, then there exists an explicit constant $C=O(g\cdot([F:\BQ]\log g +|D_F|))$ such that 
\[(\omega_{\CX/\CO},\omega_{\CX/\CO})\leq 12[F:\BQ]\sum_{i=1}^{g}(-\log\lambda_i)+C.\]
\end{thm}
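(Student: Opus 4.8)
The plan is to use Theorem \ref{main1} to reduce $(\omega_{\CX/\CO},\omega_{\CX/\CO})$ to a bound on the arithmetic degree of $f_*\omega_{\CX/\CO}$ --- an arithmetic analogue of Noether's formula --- and then to translate that degree bound into the successive minima of the $L^2$-metrized $\CO$-module $f_*\omega_{\CX/\CO}$.

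First I would extract from Theorem \ref{main1} the arithmetic Noether formula. Concretely one can apply Theorem \ref{main1} to $\CL=\omega_{\CX/\CO}^{\otimes 2}$: using $\omega_\CX=\omega_{\CX/\CO}\otimes f^*\omega_\CO$ one gets $\frac12(\CL,\CL\otimes\omega_\CX^\vee)=(\omega_{\CX/\CO},\omega_{\CX/\CO})-(2g-2)\,\wh{\deg}\,\omega_\CO$, and since $\CX_F$ has genus $\ge 2$ the bundle $\omega_\CX\otimes(\omega_{\CX/\CO}^\vee)^{\otimes 2}$ has negative degree on the generic fibre, so the higher numerical cohomology $h^1_\CX,h^2_\CX$ of $\omega_{\CX/\CO}^{\otimes 2}$ contributes only a bounded amount; this reduces the estimate to an upper bound for $h^0_\CX(\omega_{\CX/\CO}^{\otimes 2})=h^0_\CO(f_*\omega_{\CX/\CO}^{\otimes 2})$. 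Running this through the arithmetic Riemann-Roch of Faltings, Deligne and Gillet-Soul\'{e} underlying Theorem \ref{main1}, together with the Todd-class computation for the relative tangent complex (which is where the factor $12$ appears), yields $(\omega_{\CX/\CO},\omega_{\CX/\CO})=12\,\wh{\deg}(\det Rf_*\omega_{\CX/\CO})-\delta_{\mathrm{fin}}-\delta_\infty-a(g)[F:\BQ]$, where $\delta_{\mathrm{fin}}\ge 0$ is the contribution of the non-smooth fibres, $\delta_\infty=\sum_{v\mid\infty}\delta(X_v)$ is the sum of Faltings' delta invariants, and $a(g)$ is explicit. Discarding $\delta_{\mathrm{fin}}$ and using $R^1f_*\omega_{\CX/\CO}\cong\CO$ to identify $\det Rf_*\omega_{\CX/\CO}$ with $\det f_*\omega_{\CX/\CO}$ up to a bounded change of metric gives $(\omega_{\CX/\CO},\omega_{\CX/\CO})\le 12\,\wh{\deg}\det f_*\omega_{\CX/\CO}-\delta_\infty-a(g)[F:\BQ]+O(1)$.

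Next I would dispose of the archimedean term: one needs a genus-uniform lower bound $\delta(X_v)\ge -c(g)$ for Faltings' delta invariant with $c(g)$ explicit of size $O(g\log g)$, which follows from the spectral estimates of Jorgenson-Kramer and Wilms, together with the (bounded, explicit-in-genus) comparison between the Faltings, Arakelov and $L^2$ normalizations of the metric on $\det f_*\omega_{\CX/\CO}$; this replaces $\wh{\deg}\det Rf_*\omega_{\CX/\CO}$ by the arithmetic degree of $(f_*\omega_{\CX/\CO},\|\cdot\|_{L^2})$ at the cost of a term $O(g[F:\BQ]\log g)$. Finally, by Minkowski's second theorem for $\CO$-lattices applied to $(f_*\omega_{\CX/\CO},\|\cdot\|_{L^2})$ one has $\wh{\deg} f_*\omega_{\CX/\CO}\le[F:\BQ]\sum_{i=1}^g(-\log\lambda_i)+O\big(g[F:\BQ]\log g+g\log|D_F|\big)$, the logarithmic errors coming from the dimension-dependent constants in the $\CO$-version of Minkowski's theorem and from the discriminant in the covolume normalization. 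Substituting into the Noether inequality and collecting all the explicit constants yields $(\omega_{\CX/\CO},\omega_{\CX/\CO})\le 12[F:\BQ]\sum_{i=1}^g(-\log\lambda_i)+C$ with $C$ of the asserted size. The geometric analogue mentioned in the abstract is exactly the $12\deg f_*\omega_{X/B}$ side of the geometric Noether formula, read through $\deg f_*\omega_{X/B}=\sum(\text{Harder-Narasimhan slopes, counted with multiplicity})$.

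The step I expect to be the main obstacle is the archimedean estimate: producing an explicit, genus-uniform lower bound for $\sum_{v\mid\infty}\delta(X_v)$ (equivalently, an upper bound for the regularized determinants of Laplacians appearing in Theorem \ref{main1}) is genuine spectral analysis rather than formal manipulation, and it is this input that fixes the $[F:\BQ]\log g$ part of $C$. A secondary, more routine difficulty is tracking the several metric comparisons (Faltings versus Arakelov versus $L^2$) and the precise error term in Minkowski's second theorem over $\CO$ carefully enough to reach the stated shape of the constant.
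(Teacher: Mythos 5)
Your proposal lands on the same three pillars the paper uses --- the arithmetic Noether formula, Wilms' explicit lower bound for Faltings' $\delta$-invariant at archimedean places, and Minkowski's second theorem applied to $(f_*\omega_{\CX/\CO},\|\cdot\|_{L^2})$ --- so the strategy is essentially the paper's. Two points of comparison are worth flagging. First, your opening detour through Theorem~\ref{main1} applied to $\CL=\omega_{\CX/\CO}^{\otimes 2}$ does not actually do any work: that application only relates $\chi_\CX(\omega_{\CX/\CO}^{\otimes 2})$, $(\omega_{\CX/\CO},\omega_{\CX/\CO})$ and $\chi_\CX(\CO_\CX)$, and there is no way to extract the factor $12$ from such a relation; the factor $12$ is the Noether formula itself, which is an \emph{independent} input (Faltings, Moret-Bailly), stated in the paper as Proposition~\ref{Noe} and proved by combining the classical Noether formula with the Faltings--Quillen metric comparison and the $\delta$-to-analytic-torsion relation. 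Your own text acknowledges this mid-paragraph when it pivots to ``the Todd-class computation,'' but the $\omega_{\CX/\CO}^{\otimes 2}$ step should simply be dropped; the paper starts directly from Noether for $\chi_\CX(\CO_\CX)$, and then splits via $\chi_\CX(\CO_\CX)=\chi_\CO(f_*\CO_\CX)+\chi_\CO(f_*\omega_\CX)-\cdots$, bounding the $f_*\CO_\CX$ piece by the Arakelov volume (Lemma~\ref{aravol}) so that the volume cancels against the analytic torsion to produce exactly the ratio entering Faltings' $\delta$. Second, a minor misattribution: the $[F:\BQ]\log g$ part of the constant $C$ comes from the dimension-dependent constants in Minkowski's second theorem (the Hermite constants $\gamma_i\sim i$ over a rank-$g[F:\BQ]$ lattice, cf.\ Proposition~\ref{Min2}), not from the archimedean estimate --- Wilms' bound $\delta_v>-2g\log(2\pi^4)$ contributes only $O(g[F:\BQ])$, without a $\log g$. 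Neither issue is fatal; with the first step removed and the bookkeeping of the metric comparisons made explicit (Lemma~\ref{aravol} and \eqref{euabsre}), your argument matches the paper's.
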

The result has the following geometric analogue: Let $f: S\ra C$ be a fibration of curves, where $S$ is a smooth projective surface and $C$ is a smooth projective curve.  Then 
\[(\omega_{S/C},\omega_{S/C})\leq 12 \deg f_*\omega_{S/C}\leq 12\sum_{i=1}^k r_i\cdot \mu_i,\]where $r_i$ (resp. $\mu_i$) is the rank (resp. slope) of the $i$-th graded piece of $f_*\omega_{S/C}$ with respect to the Harder-Narasimhan filtration. See Remark~\ref{ns} for the analogue between slope of Harder-Narasimhan filtration and successive minima for Hermitian vector bundles over $\Spec\CO$.

We now briefly explain the approach for Theorem \ref{main:int2}. As in the geometric case, $\wh{\deg}f_*\omega_{\CX/\CO}$ is closely related to $\chi_{\CX}(\CO_{\CX})$, since $\chi_{\CX}(\CO_{\CX})$ is essentially $\chi_{\CO}(f_*\CO)+\chi_{\CO}(f_*\omega_{\CX/\CO})$. Based on a Noether formula for $\chi_{\CX}(\CO_{\CX})$ (cf.~Proposition~\ref{Noe}), which due to Faltings and Moret-Bailly, it is reduced to bound the sum of $\chi_{\CX}(\CO_{\CX})$ and analytic torsion $\det \Delta_{\CO_{\CX_\infty}}$ from above. 
The upper bound for $\chi_{\CO}(f_*\omega_{\CX/\CO})$ via success minimum follows from Minkowski type theorem in geometry of numbers. In the arithmetic case, $\chi_{\CO}(f_*\CO_{\CX})$ is related to volume of $\CX_\infty$ with respect to Arakelov metric. This volume, together with analytic torsion, gives Faltings delta-invariant, where a lower bound was deduced by Wilms \cite{Wilms:2017}. Thus the result follows.

 \begin{remark}\
\begin{itemize}
  \item [(i)] Similar result for the upper bound of $(\omega_{\CX/\CO},\omega_{\CX/\CO})$ by any single successive minima of $f_*\omega_{\CX/\CO}^k$, $k\geq 2$ was obtained by Soul\'{e} \cite{Soule:94}. His proof is based on the arithmetic stability of rank two Hermitian vector bundles, which is the arithmetic counterpart of Mumford's proof of Bogomolov's theorem in the geometric case.
  \item [(ii)] We remark that an upper bound for $(\omega_{\CX/\CO},\omega_{\CX/\CO})$ in terms of minimum height with respect to $\omega_{\CX/\CO}$ was obtained in \cite[Thm.~5~b)]{Faltings}. Also see \cite[Thm.5~b)]{Elkik:1985} for a similar result based on Szpiro's geometric methods \cite{Szpiro:1981}. Also see \cite[Thm.6.3]{Zhang:1992} for $(\omega_{\CX/\CO},\omega_{\CX/\CO})$ in terms of essential minimum.
\end{itemize}

\end{remark}
 
Suppose that the following holds: \begin{itemize}
  \item [(A)] $\CX_F$ has genus $g>1$ and $\CX$ admits a $\CO$-section $P$.
\end{itemize}Then we have a lower bound for $\lambda:=\lambda_1$. Based on this, we obtain an upper bound for $(\omega_{\CX/\CO},\omega_{\CX/\CO})$ in terms of more refined data with respect to uniformization of hyperbolic curves $\CX_\infty$. The bound depends on $g$, $|D_F|$, $[F:\BQ]$, and on the topological and arithmetic properties of $P_\infty$. We now give the precise statement: 

Let $\BH$ be the upper half plane and $(\pi_v)_{v|\infty}:(\BH)_{v|\infty}\ra (\CX_v(\BC))_{v|\infty}$ be the universal covering with fundamental group $(\Gamma_v)_{v|\infty}\subset (\PSL_2(\BR))_{v|\infty}$. Choose any representatives $(z_v)_{v|\infty}\in (\BH)_{v|\infty}$ of $P_\infty$.
 
 Let $m_P\in \BR_{>0}$ be the multiupler of the local coordinate of $P_\infty$ around $(z_v)_{v|\infty}\in (\BH)_{v|\infty}$ (cf.~Definition~\ref{mul}). Let $r_{z_v}\in \BR_{>0}$ be the radius of maximal Euclide ball centered in $z_v$ that contained in the Dirichlet fundamental domain of $\CX_v(\BC)=\BH/\Gamma_v$ relevant to $z_v$.

  Let 
  \[c_P:=-\left(\log\min\{1,m_P^{2g-1}\}+\sum_{v|\infty}\frac{\epsilon_v}{2}\log \frac{2\min\{1,r_{z_v}^{4g-3}\}}{4g-3}\right).\]
 which only depends on $P$, $(z_v)_{v|\infty}$ and $g$, then 
 \[\log \lambda\geq -c_P\] holds (cf.~Proposition~\ref{mainrpt}).
Combine with Theorem~\ref{main:int2} we have the following:

Let \[C_P=12g[F:\BQ]\cdot \begin{cases}
 c_P,&\quad \text{if $\lambda< 1$},\\
 0,&\quad \text{otherwise}.
\end{cases}
\]
\begin{thm}\label{mainthm}There exists an explicit constant 
$C=O(g\cdot([F:\BQ]\log g +|D_F|))
$ such that 
\[(\omega_{\CX/\CO},\omega_{\CX/\CO})\leq C+C_{P}\]holds for $\CX$ satisfying assumption (A). 
\end{thm}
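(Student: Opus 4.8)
The plan is to assemble Theorem~\ref{mainthm} directly from the two ingredients already stated in the introduction, namely Theorem~\ref{main:int2} and the lower bound $\log\lambda_1 \geq -c_P$ from Proposition~\ref{mainrpt}, with essentially no new geometry beyond bookkeeping. First I would invoke Theorem~\ref{main:int2}: there is an explicit constant $C = O(g\cdot([F:\BQ]\log g + |D_F|))$ with
\[(\omega_{\CX/\CO},\omega_{\CX/\CO}) \leq 12[F:\BQ]\sum_{i=1}^{g}(-\log\lambda_i) + C.\]
The task is then to control $\sum_{i=1}^g(-\log\lambda_i)$ from above. Under assumption (A), Minkowski's second theorem (already used to prove Theorem~\ref{main:int2}) gives an \emph{upper} bound on $\sum_i \log\lambda_i$ in terms of $\wh{\deg}f_*\omega_{\CX/\CO}$, which is in turn absorbed into the constant $C$; what is new here is that one also needs each individual $-\log\lambda_i$ bounded, and the monotonicity $\lambda_1\leq\lambda_2\leq\cdots\leq\lambda_g$ lets me replace every term by the worst one: $\sum_{i=1}^g(-\log\lambda_i)\leq g\cdot(-\log\lambda_1)$ whenever $\lambda_1<1$ (if $\lambda_1\geq 1$ the sum of the negative-log terms that are positive is already controlled purely by $C$, after possibly enlarging $C$).

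Next I would feed in the lower bound $\log\lambda_1 \geq -c_P$ from Proposition~\ref{mainrpt}, where
\[c_P = -\left(\log\min\{1,m_P^{2g-1}\} + \sum_{v\mid\infty}\frac{\epsilon_v}{2}\log\frac{2\min\{1,r_{z_v}^{4g-3}\}}{4g-3}\right)\]
depends only on $P$, the chosen representatives $(z_v)_{v\mid\infty}$, and $g$. Combining, in the case $\lambda_1<1$ we get $12[F:\BQ]\sum_{i=1}^g(-\log\lambda_i)\leq 12g[F:\BQ]\,c_P = C_P$, and in the case $\lambda_1\geq 1$ the contribution of the $-\log\lambda_i$ is nonpositive for $i$ with $\lambda_i\geq 1$ and the remaining terms vanish, so the whole sum is $\leq 0 = C_P$. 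Either way,
\[(\omega_{\CX/\CO},\omega_{\CX/\CO}) \leq C + C_P,\]
which is exactly the claimed inequality with the stated shape of $C$.

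The only genuinely delicate point — and the place where I would expect to spend real effort — is not in this final assembly but in verifying that the dichotomy on $\lambda_1$ is handled cleanly: when some but not all $\lambda_i$ exceed $1$, one must check that dropping the nonpositive terms and bounding the rest by $g\cdot(-\log\lambda_1)$ is legitimate, i.e. that $-\log\lambda_i \leq -\log\lambda_1$ for all $i$ and that the sign conventions in the definition of $C_P$ (the $\begin{cases}\cdots\end{cases}$ split) match. This is routine given $\lambda_1\leq\lambda_i$, but it is the step most prone to an off-by-a-sign slip. A secondary concern is making sure the ``explicit constant $C$'' in Theorem~\ref{mainthm} is literally the same $C$ as in Theorem~\ref{main:int2} (possibly after a harmless enlargement to absorb the $\lambda_i\geq 1$ terms), so that the big-$O$ claim $C = O(g\cdot([F:\BQ]\log g + |D_F|))$ is inherited verbatim; I would state this explicitly rather than leave it implicit. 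No new analytic input (analytic torsion, Faltings' delta invariant, Wilms' bound) is needed at this stage — all of that was consumed in proving Theorem~\ref{main:int2} and Proposition~\ref{mainrpt}.
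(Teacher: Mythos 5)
Your proposal is correct and follows exactly the same route as the paper: the paper's proof of this theorem consists of a single sentence invoking Theorem~\ref{main:2} to reduce to a lower bound on the $L^2$-norms of sections of $f_*\omega_{\CX/\CO}$, and then citing Proposition~\ref{mainrpt}. Your added bookkeeping — using $\lambda_1\leq\lambda_i$ to get $\sum_i(-\log\lambda_i)\leq g(-\log\lambda_1)$, and splitting on $\lambda_1<1$ versus $\lambda_1\geq 1$ to match the case definition of $C_P$ — is precisely what the paper leaves implicit, and it is correct. (One small note: the displayed conclusion of Proposition~\ref{mainrpt} reads $\log\|\omega\|_{L^2}\geq -\log c_P$, but both the introduction and the proposition's own proof give $\log\|\omega\|_{L^2}\geq -c_P$, which is what you correctly use.)
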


The approach is a generalization of the classical lower bound of Petersson inner product of normalized eigenforms. 
\begin{remark}\
  \begin{itemize}
    \item [(i)]For $\CX$ to be the integral model of modular curve or Fermart curves, good upper bounds of the $(\omega_{\CX/\CO},\omega_{\CX/\CO})$ are already known (cf.~\cite{Abbes-Ullmo, Michel-Ullmo, Kuhn, Curilla-Kuhn}). 
    \item [(ii)]The self-intersection number $(\omega_{\CX/\CO},\omega_{\CX/\CO})$ have rich arithmetic information \cite{Gasbarri}. A good effective upper bound or lower bound of $(\omega_{\CX/\CO},\omega_{\CX/\CO})$ would have many consequences. For example, a good lower bound implies Bogomolov conjecture \cite{Zhang:1993} and a good upper bound would imply the effective Mordell conjecture \cite{Parshin:1988, Soule, Vojta}.
  \end{itemize}

\end{remark}

\section*{Notations}

Let $F$ be a number field, denote by $\CO$ its ring of integers, and $|D_F|$ its absolute discriminant.
For $v$ a place of $F$, let
\[\epsilon_v=\begin{cases}
  \log q_v,&\quad  v\ \text{is finite with residue field $\BF_{q_v}$},\\
  1,&\quad \text{$v$ is real},\\
  2,&\quad \text{$v$ is complex}.\\
\end{cases}\]
Let $\wh{\deg}:\wh{\Pic}(\CO)\ra \BR$ be the arithmetic degree.
For an arithmetic surface $f:\CX\ra \Spec\CO$, let $g$ be the genus of $\CX_F$.
For a coherent $\CO$-module, we will use subindex $t$ (resp. $f$) for its torsion part (resp. free part). 

We will use $O(x)$ of a number $x\in \BR_{\geq 0}$ to mean bounded by the multiple of $x$ by an explicit positive constant that does not depend on $x$.
\section*{Acknowledgments}

The author expresses sincere gratitude to Ye Tian for constant encouragement and to Ping Xi for insightful discussions. The author is particularly indebted to C. Gasbarri, both for inspiring talks during the 2019 MCM workshop in Beijing and for subsequent enlightening discussions that greatly shaped the perspective of this work. The author is also grateful to Binggang Qu and Xiaozong Wang for helpful comments and discussions.
\section{An absolute arithmetic Riemann-Roch formula} 
This section introduces numerical cohomology for arithmetic surfaces. The main result is the arithmetic Riemann-Roch formula for numerical cohomology (cf.~Theorem~\ref{main:RR}).
\subsection{Preliminary}\label{pre}In Sections \ref{ssd} and \ref{Sqmrr}, we recall some basics of $L^2$-norm, Quillen metric, and Serre duality for Riemann surfaces, following Deligne \cite[Sec.~1.2~$\&$~1.4]{Deligne:85}. We also recall the relative version of arithmetic Riemann-Roch in Section \ref{arr}.
\subsubsection{Serre duality for Riemann surfaces}
\label{ssd}
Let $X$ be a compact Riemann surface equipped with a Hermitian metric $g$. Let $\Omega_X$ be the canonical sheaf of $X$ and let $L$ be a Hermitian line bundle on $X$.

Let $\Omega^{0q}(L)$ be the sheaf of $L$ valued smooth $(0,q)$ forms on $X$.
Consider the complex 
\[\ov{\partial}:\Gamma(X,\Omega^{00}(L))\ra \Gamma(X,\Omega^{01}(L)),\] one have 
\[H^0(X,L)=\ker(\ov{\partial})\] and \[H^1(X,L)=\coker(\ov{\partial}).\]
There are natural pre-Hilbert space structure on $\Gamma(X,\Omega^{00}(L))$ and $\Gamma(X,\Omega^{01}(L))$ defined as follows: 
For $v_1,v_2\in \Gamma(X,\Omega^{00}(L))$, locally, let $\alpha$ be a norm $1$ section of $\Omega^{00}(\Omega_{X})$ with respect to $g$,
\[\pair{\ ,\ }_{L^2}\] on $\Gamma(X,\Omega^{00}(L))$ is the integral over $X$ of 
\[\frac{-1}{2\pi i}\pair{v_1,v_2}\alpha\wedge\ov{\alpha},\]where $\pair{\ ,\ }$ is the Hermitian structure on $L$.
Let $u_1,u_2\in \Gamma(X,\Omega^{01}(L))$, locally, $u_i$ is of the form $\beta_iv_i$, 
\[\pair{\ ,\ }_{L^2}\]  on $\Gamma(X,\Omega^{01}(L))$ is the integral over $X$ of 
\[\frac{-1}{2\pi i}\pair{v_1,v_2}\ov{\beta_2}\wedge \beta_1.\]
Thus, there is a natural Hermitian structure on $H^0(X,L)$.
Hodge theory gives 
\[\Gamma(X,\Omega^{01}(L))=\ov{\partial}(\Gamma(X,\Omega^{00}(L)))\oplus \ker \ov{\partial}^*\] and hence there is a natural Hermitian structure on $H^1(X,L)$. 

The pairing 
\[\Gamma(X,\Omega^{00}(L))\times \Gamma(X,\Omega^{01}(\Omega_X\otimes L^\vee))\xlongrightarrow{\frac{1}{2\pi i}\int_X} \BC,\]
induces norm-preserving duality between the two complexes 
\[\Gamma(X,\Omega^{00}(L))\xrightarrow{\ov{\partial}_{L}}\Gamma(X,\Omega^{01}(L))\]
 and 
 \[\Gamma(X,\Omega^{00}(\Omega_X\otimes L^\vee))\xrightarrow{\ov{\partial}_{\Omega_X\otimes L^\vee}}\Gamma(X,\Omega^{01}(\Omega_X\otimes L^\vee))\] and hence an isometry on 
 \begin{equation}\label{sd}H^0(X,L)\simeq H^1(X,\Omega_X\otimes L^\vee)^\vee.\end{equation} 
 
One have metric $\pair{\ ,\ }_{L^2}$ on $\det R\Gamma(X,L)$ induced by the $L^2$-norm on $H^i(X,L)$ as above. 

\subsubsection{Quillen metric and Riemann-Roch at $\infty$}\label{Sqmrr}
 Let $\Delta_L=\ov{\partial}_{L}^\vee\ov{\partial}_{L}$ be the Laplacian with spectrum $\sigma(\Delta_L)$. It's zeta function is defined by \[\zeta_L(s)=\sum_{\lambda\in\sigma(\Delta_L)}\frac{1}{\lambda^s}\] which has analytic continuation. Recall that the regularized determinant(analytic torsion) $\det \Delta_{L}$ of $\Delta_L$ is defined by $e^{-\zeta_{L}'(0)}$. Then one have \begin{equation}\label{lap}\det \Delta_L=\det \Delta_{\Omega_X\otimes L^\vee}.\end{equation}
 The Quillen metric on $\det R\Gamma(X,L)$ is defined by \begin{equation}\label{l2q}\pair{\ ,\ }_{Q}=(\det \Delta_L)^{-1}\pair{\ ,\ }_{L^2}.\end{equation}
As $L$ varies, Quillen metric varies smoothly (cf.~\cite{Quillen:85}).
The Quillen metric also has the following properties: Let $\pair{\ ,\ }$ be the Deligne pairing on Hermitian line bundles of $X$, then the main result of \cite{Deligne:85} establishs the following isometry:
 \[(\det\nolimits_Q R\Gamma(X,L))^{\otimes 2}=\pair{L,L\otimes\Omega_X^\vee}\otimes (\det\nolimits_Q R(\Gamma,\CO_{X}))^{\otimes 2}.\]
  We may use $\det_{L^2} R\Gamma(X,L)$ and $\det_{Q} R\Gamma(X,L)$ to distinct two metrics.
\subsubsection{Riemann-Roch for arithmetic surfaces}\label{arr}
Let $f:\CX\ra \Spec\CO$ be an arithmetic surface. Choose the Arakelov metric on $\omega_{\CX/\CO}$. For $\CL$ be a Hermitian line bundle on $\CX$, take metric on $\det f_*\CL$ to be either Faltings metric or Quillen metric. Both of them depend on the Arakelov metric.
The following Riemann-Roch formula is due to Faltings \cite{Faltings}, Deligne \cite{Deligne:85} and Gillet-Soul\'{e} \cite{Gillet-Soule}:
Let  $\wh{\deg}$ is the arithmetic degree on $\wh{\Pic}(\CO)$ and $(\ ,\ )$ is the Arakelov arithmetic intersection pairing on $\wh{\Pic}(\CX)$, then
\begin{equation}\label{arrf}\wh{\deg}\det Rf_* \CL=\frac{1}{2}(\CL,\CL\otimes\omega_{\CX/\CO}^\vee)+\wh{\deg}\det Rf_* \CO_{\CX}.\end{equation}
We will use $\det_{F} Rf_* \CL$ and $\det_{Q} Rf_* \CL$ to distinct these two metrics.
Then they are connected by the following \cite[Sec.~4.5]{Soule:94}
\[\wh{\deg}(\det\nolimits_{Q} Rf_*\CL)^2:=\wh{\deg}(\det\nolimits_F Rf_*\CL)^2\prod_{v|\infty}e^{\zeta_{\CX_v(\BC)}'(0)}\pi^{-g}\vol(X_v(\BC))\]where $\zeta_{\CX_v(\BC)}(s)=\zeta_{\CO_{\CX_v(\BC)}}(s)$ and the volume is with respect to Arakelov metric.

\subsection{Numerical cohomology}
We recall basic properties of numerical cohomology for arithmetic curves and the related upper bound in the geometry of numbers in \S\ref{efco}. Then, in \S\ref{effsur} we introduce numerical cohomology for arithmetic surfaces and the relation to the Quillen metric on determinant bundle.
\subsubsection{Numerical cohomology for Arithmetic curve}\label{efco}
Let $\CO$ be the ring of integers of a number field $F$. Recall that a Hermitian coherent module $\CF$ on $\Spec\CO$ is a coherent $\CO$-module $\CF$ together with a Hermitian metric on $\CF_v\otimes_{F_v} \BC$ for each infinite place $v$. Let $||\cdot||$ be the induced metric on $\CF\otimes_{\BZ} \BR$. It has the property that \[-\log\ \frac{\covol(\CF)}{\#\CF_{t}}=\wh{\deg}\det\CF-\rank_{\CO}\CF\cdot \log\sqrt{|D_F|}.\]

For each infinite place $v$ of $F$, there is a natural way to equip the usual coherent module with a "trivial" Hermitian structure with metric on $\CF_v\otimes_{F_v}\BC$ given by \[||1||_v^2=\begin{cases}1,&\quad \text{$v$ is real},\\ 
2,&\quad \text{$v$ is complex}.\end{cases}\]
The sheaf of relative differential $\omega_\CO:=\omega_{\CO/\BZ}$ equipped with the "trivial" Hermitian structure plays an analogue role as the canonical sheaf in the geometric case for an algebraic curve over a finite field. 
Recall that \[h_{\CO}^0(\CF):=\log\sum_{x\in \CF}e^{-\pi ||x||^2},\quad h^1_{\CO}(\CF):=h_{\CO}^0(\omega_{\CO}\otimes \CF^\vee)\] and \begin{equation}\label{rrc1}\chi_\CO(\CF):=h_\CO^0(\CF)-h_\CO^1(\CF).\end{equation}
Then the Riemann-Roch formula for $\CF$ says that 
\begin{equation}\label{rrc}\chi_\CO(\CF)=-\log\frac{\covol(\CF)}{\#\CF_{t}}=\wh{\deg}\det\CF+\rank_{\CO}\CF \cdot \chi_\CO(\CO)
\end{equation}
with $\chi_\CO(\CO)=-\log\sqrt{|D_F|}$ (cf.~\cite{Geer-Schoof}).

We end this section by recalling the following results on upper bounds for $h_{\CO}^0$ and $\chi_{\CO}$.
\begin{prop}\label{bd}Let $\CF$ be a Hermitian vector bundle over $\Spec \CO$, then the following holds. 
  \begin{itemize}
   \item [(i)]\cite[Prop.~5.4]{Groenewegen}Let $\lambda$ be the minimum of $\CF$ and let $\gamma_i<i$ be the $i$-th Hermitian constant for $1\leq i\leq n:=\rank_{\BZ}\CF$, then \[h_\CO^0(\CF)\leq nh^0_{\BZ}(\BZ)+\sum_{i=1}^{n}\log\max\left\{1,\frac{\gamma_i}{\lambda}\right\}.\]
   \item [(ii)]\cite[Prop.~2.7.1]{Bost:2020}If $\CF$ is a Hermitian line bundle with $\wh{\deg}\CF\leq 0$, then 
   \[h^0_\CO(\CF)\leq 3^{[K:\BQ]}(1-\frac{1}{2\pi})e^{-\pi[F:\BQ]e^{-\frac{2}{[F:\BQ]}\wh{\deg}\CF}}\leq 1.\]
    \item [(iii)]\cite[Prop.~2.7.2]{Bost:2020} If $\CF$ is a Hermitian line bundle with $\wh{\deg}\CF\geq 0$, then 
   \[h^0_\CO(\CF)\leq 1+\wh{\deg}\CF.\]
  \end{itemize}
\end{prop}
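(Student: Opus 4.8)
These three estimates appear in the cited references; I sketch the arguments I would run. Throughout write $\Lambda$ for the Euclidean lattice underlying the Hermitian module $\CF$, of rank $n=\rank_\CO\CF\cdot[F:\BQ]$ (so $n=[F:\BQ]$ in (ii)--(iii)), and recall $h^0_\CO(\CF)=\log\Theta_\Lambda(1)$ with $\Theta_\Lambda(t):=\sum_{x\in\Lambda}e^{-\pi t\|x\|^2}$. Two elementary consequences of Poisson summation will be used repeatedly: (a) for every vector $c$, $\sum_{x\in\Lambda}e^{-\pi\|x+c\|^2}\le\Theta_\Lambda(1)$, since on the Fourier side the shift only multiplies the positive summands by characters of modulus $\le 1$; and (b) for $0<t\le 1$, $\Theta_\Lambda(t)\le t^{-n/2}\Theta_\Lambda(1)$, since Poisson turns $\Theta_\Lambda(t)$ into $t^{-n/2}\covol(\Lambda)^{-1}\Theta_{\Lambda^*}(1/t)$, with $\Theta_{\Lambda^*}(1/t)\le\Theta_{\Lambda^*}(1)$ because $1/t\ge 1$, while $\covol(\Lambda)^{-1}\Theta_{\Lambda^*}(1)=\Theta_\Lambda(1)$.

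For (i) the plan is to peel off one dimension at a time. Let $H$ be the hyperplane spanned by lattice vectors realizing the first $n-1$ successive minima, so $\Lambda\cap H$ has rank $n-1$ and minimum $\ge\lambda$, while $p_{H^\perp}(\Lambda)=\BZ v$ with $\|v\|=\covol(\Lambda)/\covol(\Lambda\cap H)$. Slicing $\Theta_\Lambda(1)$ along the fibres of $p_{H^\perp}$ — each fibre a translate of $\Lambda\cap H$, with $\|x\|^2$ the sum of squared $H^\perp$- and $H$-components — and applying (a) fibrewise gives $\Theta_\Lambda(1)\le\Theta_{\BZ v}(1)\cdot\Theta_{\Lambda\cap H}(1)$; by (b) in rank one, $\Theta_{\BZ v}(1)=\Theta_\BZ(\|v\|^2)\le\Theta_\BZ(1)\cdot\max\{1,1/\|v\|\}$, so this step adds at most $h^0_\BZ(\BZ)+\log\max\{1,1/\|v\|\}$ to $h^0$. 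Minkowski's second theorem bounds $\covol(\Lambda)$ below and $\covol(\Lambda\cap H)$ above by the appropriate partial products of the $\lambda_i$, forcing $\|v\|\gtrsim\lambda_n\ge\lambda$; arranging the reduction so that the volume factor lost at step $i$ is exactly $\gamma_i$ turns this into $1/\|v\|\le\gamma_n/\lambda$. Iterating on $\Lambda\cap H$, whose minimum is still $\ge\lambda$, produces the stated inequality with one Hermite constant per step. The only delicate point is the bookkeeping that lands precisely on $\gamma_i$ rather than a worse absolute multiple; this is \cite[Prop.~5.4]{Groenewegen}.

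For (ii), assume $\wh{\deg}\CF\le 0$. First bound the shortest vector: representing $\CF$ by a fractional ideal $\fa$ with archimedean scalings, the arithmetic--geometric mean inequality applied to the $[F:\BQ]$ weighted archimedean components of $0\ne x\in\fa$, together with $|N_{F/\BQ}(x)|\ge N(\fa)$, yields $\|x\|^2\ge[F:\BQ]\,e^{-2\wh{\deg}\CF/[F:\BQ]}$ for all $0\ne x\in\Lambda$. Hence $\Theta_\Lambda(1)=1+\sum_{0\ne x}e^{-\pi\|x\|^2}$ with every exponent at least $\pi[F:\BQ]\,e^{-2\wh{\deg}\CF/[F:\BQ]}$, and a packing estimate — disjoint balls of radius $\lambda_1/2$ at the lattice points bound the number of points in each spherical shell of width $\lambda_1$ — dominates the tail sum by a rapidly convergent series whose leading term already carries the full Gaussian factor; summing gives $\sum_{0\ne x}e^{-\pi\|x\|^2}\le 3^{[F:\BQ]}(1-\tfrac1{2\pi})e^{-\pi[F:\BQ]e^{-2\wh{\deg}\CF/[F:\BQ]}}$. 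Then $h^0_\CO(\CF)=\log(1+\cdots)\le\sum_{0\ne x}e^{-\pi\|x\|^2}$ by $\log(1+y)\le y$, and the right-hand side is checked to be $<1$ using $(3/e^\pi)^{[F:\BQ]}<1$. Extracting these explicit constants is \cite[Prop.~2.7.1]{Bost:2020}.

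For (iii), assume $d:=\wh{\deg}\CF\ge 0$ and deduce it from (ii). Let $\CF'$ be $\CF$ with every archimedean metric scaled up by the single factor $e^{d/[F:\BQ]}$; then $\wh{\deg}\CF'=0$, the lattice of $\CF'$ is $e^{d/[F:\BQ]}\Lambda$, and $h^0_\CO(\CF)=\log\Theta_{\Lambda_{\CF'}}(e^{-2d/[F:\BQ]})$. Applying (b) with $t=e^{-2d/[F:\BQ]}\le 1$ and $n=[F:\BQ]$ gives $\Theta_{\Lambda_{\CF'}}(e^{-2d/[F:\BQ]})\le e^{d}\,\Theta_{\Lambda_{\CF'}}(1)$, whence $h^0_\CO(\CF)\le d+h^0_\CO(\CF')\le d+1$ by (ii) applied to $\CF'$; this reproduces \cite[Prop.~2.7.2]{Bost:2020}. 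Across the whole proposition the only genuine difficulty is the sharp-constant bookkeeping in (i) and (ii); the structural engine — one-dimensional reduction of theta series controlled by Minkowski's theorems, together with Poisson summation — is elementary.
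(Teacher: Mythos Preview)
The paper does not prove this proposition; it states the three parts with citations to \cite{Groenewegen} and \cite{Bost:2020} and adds only a one-line remark that (i) follows from Minkowski's theorem while (ii) and (iii) are parallel to the geometric case. Your sketches --- iterated one-dimensional slicing of the theta series controlled by Hermite constants for (i), the arithmetic--geometric mean lower bound on the first minimum together with a shell-packing tail estimate for (ii), and rescaling to degree zero combined with the Poisson bound $\Theta_\Lambda(t)\le t^{-n/2}\Theta_\Lambda(1)$ for (iii) --- are reasonable outlines of the arguments in those references, and you correctly flag that the only substantive content is the sharp-constant bookkeeping in (i) and (ii), which you defer back to the cited sources exactly as the paper does.
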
 
\begin{remark}
Part (i) follows from Minkowski theorem. Parts (ii) and (iii) are parallel to the geometric case.
\end{remark}
\begin{prop}[Minkowski's second theorem]\label{Min2}
Let $\CF$ be a Hermitian vector bundle over $\Spec \CO$ and let $n=\rank_{\BZ}\CF$. For $1\leq i\leq r:=\rank_{\CO}\CF$, let \[\lambda_i:=\inf\{\lambda\ |\ \text{exists $i$ $F$-linear independent vectors in $\CF$ with $||\cdot||\leq \lambda$}\}\] be the successive minima of $\CF$, then 
\[\chi_{\CO}(\CF)\leq  [F:\BQ]\sum_{i=1}^{r}(-\log\lambda_i)+c,\]where \[c=O(n\log r)\] given by $r[F:\BQ]\log 2-\log (V_{r}^{r_1}(2V_{2r})^{r_2})$ with $V_k$ the usual Lebesgue volume of $k$-dimensional unit ball in $\BR^k$. 
\end{prop}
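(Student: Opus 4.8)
The plan is to reduce to the classical Minkowski second theorem over $\BR$ by restriction of scalars. First I would replace the $\CO$-module $\CF$ by the real vector space $V := \CF\otimes_\BZ\BR \cong \prod_{v|\infty}\CF_v\otimes_{F_v}\BC$ of real dimension $n = [F:\BQ]\,r$, equipped with the metric $\|\cdot\|$ described in \S\ref{efco}, and I would view the torsion-free quotient (equivalently, the image of $\CF$) as a lattice $\Lambda\subset V$. The key point is the volume normalization already recorded in the excerpt: $-\log\big(\covol(\Lambda)/\#\CF_t\big) = \wh{\deg}\det\CF - r\log\sqrt{|D_F|}$, so that by the Riemann-Roch formula \eqref{rrc}, $\chi_\CO(\CF) = -\log(\covol(\Lambda)/\#\CF_t)$. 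Thus the proposition is exactly an upper bound for $-\log\covol(\Lambda)$ (after accounting for the harmless torsion factor $\log\#\CF_t\ge 0$, which only helps the inequality — here one must be slightly careful that the stated $\chi_\CO(\CF)$ is for the full module $\CF$, but $h^0_\CO$ is monotone under the torsion contribution, so replacing $\CF$ by its free part only increases the left side; alternatively absorb $\#\CF_t\geq 1$ trivially).

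Next I would relate the successive minima $\lambda_i$ of $\CF$ (which are defined using $F$-linear independence, $i = 1,\dots, r$) to the ordinary successive minima $\mu_1\le\cdots\le\mu_n$ of the lattice $\Lambda\subset V$ with respect to the Euclidean norm. The crucial observation is that if $x\in\CF$ has $\|x\|\le\lambda$, then the whole $F$-orbit — or rather a $\BZ$-basis of the $\CO$-submodule generated by $x$, which has $\BZ$-rank $[F:\BQ]$ — consists of vectors whose norms are controlled: multiplication by an integer basis $\omega_1,\dots,\omega_d$ of $\CO$ scales norms by bounded factors. More cleanly, $i$ many $F$-linearly independent vectors $x_1,\dots,x_i$ with $\|x_j\|\le\lambda_i$ produce $i\,d$ many $\BZ$-linearly independent vectors $\{\omega_k x_j\}$; controlling $\max_k\|\omega_k x_j\|$ by $\lambda_i$ times a constant depending only on $F$ then gives $\mu_{jd}\lesssim_F \lambda_j$ (after reindexing). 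This shows $\prod_{\ell=1}^n \mu_\ell \le \big(\text{const}_F\big)\cdot\prod_{i=1}^r\lambda_i^{d}$, i.e. $\sum_\ell\log\mu_\ell \le d\sum_i\log\lambda_i + O(n)$. Then classical Minkowski II, $\covol(\Lambda)\le \prod_{\ell=1}^n\mu_\ell \cdot \big(V_n^{r_1}(2V_{2n})^{r_2}\big)^{-1}$ in the appropriate normalization — matching the volume constant $V_r^{r_1}(2V_{2r})^{r_2}$ displayed in the statement, which records that over a complex place the relevant real dimension is $2r$ and the metric carries the factor $\|1\|^2_v = 2$ — gives $-\log\covol(\Lambda) \le -\sum_\ell\log\mu_\ell + \log\big(V_r^{r_1}(2V_{2r})^{r_2}\big)$, hence $\chi_\CO(\CF)\le d\sum_i(-\log\lambda_i) + c$ with $c = r[F:\BQ]\log 2 - \log(V_r^{r_1}(2V_{2r})^{r_2}) = O(n\log r)$ using Stirling for the ball volumes $V_k$.

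The main obstacle, and the part requiring genuine care rather than bookkeeping, is getting the constant $c$ to match the stated shape exactly — in particular verifying that one can take the $\BZ$-basis of $\CO$ so that the norm-distortion factors $\|\omega_k x\|/\|x\|$ contribute only the clean term $r[F:\BQ]\log 2$ and not an unruly dependence on $|D_F|$. I expect the right way to do this is to \emph{not} use an arbitrary integral basis of $\CO$, but rather to observe directly that $F$-linear independence of $x_1,\dots,x_i$ already forces the sublattice $\sum_j \CO x_j$ to have $\BZ$-rank $id$, and to bound its successive minima intrinsically: for any $x$ with $\|x\|\le\lambda$, the archimedean norms satisfy $\|x\|_v^2 \le \lambda^2$ for each $v$, and one can choose within $\CO x$ short vectors using that $\CO$ itself, with its trace form, has covolume $\sqrt{|D_F|}$ but bounded successive minima only up to $|D_F|$ — so in fact a dependence on $|D_F|$ may be unavoidable in the naive approach and the clean constant must come from a smarter argument, likely reducing to Minkowski II \emph{directly on the full lattice $\Lambda$} without ever passing through $\CO$-structure, by reinterpreting the $F$-linear-independence condition as a condition on $\Lambda$ modulo the $\CO$-action. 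I would aim to show that the quantity $\sum_{i=1}^r\log\lambda_i$ for $\CF$ equals (up to the explicit $\log 2$ terms) $\frac{1}{d}\sum_{\ell=1}^n\log\mu_\ell$ for $\Lambda$ by a direct comparison of the two variational definitions, sidestepping any $|D_F|$ loss; this identification is the technical heart, and once it is in place the proposition follows immediately from classical Minkowski II and Stirling.
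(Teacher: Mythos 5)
The paper itself does not spell out a proof of Proposition~\ref{Min2}; it is invoked as a known variant of Minkowski's second theorem. So let me assess your sketch on its own terms.

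Your opening reduction is right: since $\CF$ is a vector bundle (so $\CF_t=0$), the Riemann--Roch formula \eqref{rrc} gives $\chi_\CO(\CF)=-\log\covol(\Lambda)$ for the lattice $\Lambda=\CF$ in $V=\CF\otimes_\BZ\BR$, and you correctly realize that everything hinges on comparing the $F$-successive minima $\lambda_1\le\cdots\le\lambda_r$ to the ordinary $\BZ$-successive minima $\mu_1\le\cdots\le\mu_n$ of $\Lambda$. The gap is that you set up that comparison in the wrong direction, and the rest of the argument unravels as a result. Concretely: the Minkowski inequality you need is $\covol(\Lambda)\ge 2^{-n}\vol(K)\prod_\ell\mu_\ell$, which yields $-\log\covol(\Lambda)\le -\sum_\ell\log\mu_\ell + n\log 2-\log\vol(K)$. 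To convert $-\sum_\ell\log\mu_\ell$ into $-[F:\BQ]\sum_i\log\lambda_i$ one must bound $\sum_\ell\log\mu_\ell$ from \emph{below} by $[F:\BQ]\sum_i\log\lambda_i$; equivalently one needs inequalities of the form $\lambda_j\le\mu_{\text{(something)}}$. You instead manufacture short $\BZ$-independent vectors $\omega_k x_j$ to show $\mu_{jd}\lesssim_F\lambda_j$, i.e.\ an upper bound $\sum_\ell\log\mu_\ell\le[F:\BQ]\sum_i\log\lambda_i+O(n)$. That inequality points the wrong way and cannot be fed into the covolume bound. (Your displayed Minkowski inequality $\covol(\Lambda)\le\prod\mu_\ell\cdot(\cdots)^{-1}$, followed by $-\log\covol(\Lambda)\le\cdots$, also flips a sign; once that is repaired you can see the directions do not chain.)

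The right comparison is elementary and, notably, involves no integral basis of $\CO$ and no $|D_F|$ at all. For each $j$ set $m=(j-1)[F:\BQ]+1$ and pick $\BZ$-independent $y_1,\dots,y_m\in\Lambda$ with $\|y_k\|\le\mu_m$. Their $F$-span $W\subset\CF_F$ satisfies $[F:\BQ]\dim_F W\ge\rank_\BZ(\Lambda\cap W)\ge m=(j-1)[F:\BQ]+1$, forcing $\dim_F W\ge j$; hence some $j$ of the $y_k$ are $F$-linearly independent, all of norm $\le\mu_m$, giving $\lambda_j\le\mu_{(j-1)[F:\BQ]+1}$. By monotonicity of the $\mu_\ell$ this immediately yields $[F:\BQ]\sum_{j=1}^r\log\lambda_j\le\sum_{\ell=1}^n\log\mu_\ell$, which is exactly the missing inequality. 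Plugging this into $-\log\covol(\Lambda)\le-\sum_\ell\log\mu_\ell+n\log 2-\log\vol(K)$ gives the proposition, with $c=n\log 2-\log\vol(K)$; the stated constant $r[F:\BQ]\log 2-\log\bigl(V_r^{r_1}(2V_{2r})^{r_2}\bigr)$ then records the volume of the relevant unit body as a product over archimedean places. So your anxiety about an ``unavoidable'' $|D_F|$-loss was in fact a symptom of attacking the comparison from the wrong side: once the roles of $\lambda$ and $\mu$ are swapped, the argument is clean, $|D_F|$ only enters through $\chi_\CO(\CO)=-\tfrac12\log|D_F|$, which is already built into the left-hand side.
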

\begin{remark}\ \label{ns}
\begin{itemize}
  \item [(i)]The Proposition \ref{Min2} has the following geometric analogue: Let $C$ be a projective smooth curve over a field and let $\CF$ be a rank $r$ semi-stable Hermitian vector bundle on $C$. Let $0=\CF_0\subset \CF_1\subset \cdots\subset \CF_k=\CF$ be the Harder-Narasimhan filtration of $\CF$. 
One have 
\[\deg \CF=\sum_{i=1}^k r_i\cdot \mu_i,\]where $r_i=\rank \CF_{i}/\CF_{i-1}$ and 
\[\mu_i=\frac{\deg\CF_{i}/\CF_{i-1}}{\rank \CF_{i}/\CF_{i-1}}\] is the slope of $\CF_{i}/\CF_{i-1}$.
\item [(ii)] For Hermitian $\CO$ vector bundle $\CF$, an analogue of Harder-Narasimhan filtration was introduce in \cite{Stuhler} and \cite{Grayson}.

For $1\leq i\leq r=\rank_{\CO}\CF$, let $\wh{\mu}_i$ be the $i$-th slope of the filtration introduce by Bost in \cite{Bost:1996}. Then $\wh{\mu}_i$ and $-\log\lambda_i$ in Proposition~\ref{Min2} are closely related \cite{Soule:97,Borek}.
\end{itemize}

\end{remark}
\subsubsection{Numerical cohomology for Arithmetic surface}\label{effsur}
Let $f:\CX\ra \Spec\CO$ be an arithmetic and let $\CL$ be a Hermitian line bundle on $\CX$. As in the introduction, we use Serre spectral sequence, Serre-dual and numerical cohomology of Hermitian coherent $\CO$-modules to define numerical cohomology of $\CL$. 
Let $\omega_{\CX/\CO}$ be the relative dualizing sheaf and let $\omega_{\CX}=\omega_{\CX/\BZ}$ which is the arithmetic analogue of canonical sheaf of algebraic surface in the geometric case\footnote{For example, it satisfies the following adjunction formula that for $C\simeq \Spec \CO$ an horizontal curve in $\CX$, then $\wh{\deg}\omega_{C}=(C,C+\omega_{\CX})$ holds.}.
\begin{defn}\
  \begin{itemize}
    \item [(i)]Define the numerical cohomology of a Hermitian line bundle $\CL$ on $\CX$ by 
\[h^0_{\CX}(\CL):=h^0_{\CO}(f_*\CL),\] \[\begin{aligned}h_\CX^1(\CL):=&h_\CO^0(\omega_\CO\otimes (f_*\CL)^\vee)+h_\CO^0(f_*(\omega_{\CX/\CO}\otimes\CL^\vee)^\vee)\\
  +&\frac{1}{2}(\wh{\deg}\det H^1(\CX,\CL)_t+\wh{\deg}\det H^1(\CX,\omega_\CX\otimes\CL^{\vee})_t),\\
\end{aligned}\] and \[h^2_{\CX}(\CL):=h^0_{\CX}(\omega_{\CX}\otimes \CL^\vee).\]
\item [(ii)]The Euler characteristic of the numerical cohomology is defined by 
\[\chi_{\CX}(\CL)=h^0_\CX(\CL)-h^1_{\CX}(\CL)+h^2_\CX(\CL).\]
  \end{itemize}

\end{defn}
\begin{remark}
One have \[\begin{aligned}
  h^1(\CL)=&h_\CO^0(\omega_\CO\otimes (f_*\CL)^\vee)+h_\CO^0(\omega_\CO\otimes(f_*(\omega_\CX\otimes\CL^\vee))^\vee)\\
  +&\frac{1}{2}\left(\wh{\deg}\det H^1(\CX,\CL)_t+\wh{\deg}\det H^1(\CX,\omega_\CX\otimes\CL^{\vee})_t\right)\\
=&h^1(\omega_\CX\otimes\CL^\vee)
\end{aligned}\]
and \begin{equation}\label{eu}\begin{aligned}
  \chi_{\CX}(\CL)=&\chi_\CO(f_*\CL)+\chi_\CO(f_*(\omega_\CX\otimes\CL^\vee))\\
  -&\frac{1}{2}\left(\wh{\deg}\det H^1(\CX,\CL)_t+\wh{\deg}\det H^1(\CX,\omega_\CX\otimes\CL^{\vee})_t\right)\end{aligned}
\end{equation}
\end{remark}

Let $\det_{L^2}Rf_*(\CL)$ be the invertible $\CO$-module $\det Rf_*(\CL)$ equipped with $L^2$-norm in Section \s\ref{pre} with $\omega_{\CX}$.
One has the following relation between $\chi_{\CX}(\CL)$ and $\det_{L^2}Rf_*(\CL)$.

\begin{prop}\label{com}
The following equality holds: 
\[
 \chi_{\CX}(\CL)=\wh{\deg}\det\nolimits_{L^2}Rf_*\CL-\frac{1}{2}(\CL,f^*\omega_{\CO})+\frac{1}{4}(\omega_{\CX},f^*\omega_{\CO}).
\]

\end{prop}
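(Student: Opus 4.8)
The plan is to unwind both sides into data on $\Spec\CO$ and compare using the relative Riemann–Roch formula \eqref{arrf} together with the definitions of numerical cohomology. First I would use \eqref{eu}, which already expresses $\chi_\CX(\CL)$ as $\chi_\CO(f_*\CL)+\chi_\CO(f_*(\omega_\CX\otimes\CL^\vee))$ minus half the sum of the arithmetic degrees of the torsion submodules $H^1(\CX,\CL)_t$ and $H^1(\CX,\omega_\CX\otimes\CL^\vee)_t$. By the arithmetic curve Riemann–Roch \eqref{rrc}, each $\chi_\CO$ term equals $\wh{\deg}\det$ of the corresponding Hermitian $\CO$-module plus a rank multiple of $\chi_\CO(\CO)=-\log\sqrt{|D_F|}$; here the relevant metrics are the $L^2$-metrics coming from the Arakelov metric on $\omega_\CX$ (note $\omega_\CX=\omega_{\CX/\CO}\otimes f^*\omega_\CO$, so the archimedean metrics match up after accounting for the trivial metric on $\omega_\CO$).

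The next step is to identify $\wh{\deg}\det\nolimits_{L^2}Rf_*\CL$ with the right combination of these pieces. Writing $\det Rf_*\CL = \det f_*\CL \otimes (\det R^1f_*\CL)^{\vee}$ and splitting $R^1f_*\CL$ into its free and torsion parts, relative Serre duality gives an identification of the free part of $R^1f_*\CL$ with the dual of $f_*(\omega_{\CX/\CO}\otimes\CL^\vee)$, and the $L^2$-metrics correspond under the isometry \eqref{sd} fiberwise over the archimedean places. Thus $\wh{\deg}\det\nolimits_{L^2}Rf_*\CL$ equals $\wh{\deg}\det\nolimits_{L^2}f_*\CL + \wh{\deg}\det\nolimits_{L^2}f_*(\omega_{\CX/\CO}\otimes\CL^\vee)$ plus the torsion contribution $-\wh{\deg}\det H^1(\CX,\CL)_t$ (or, symmetrized, $-\tfrac12(\wh{\deg}\det H^1(\CX,\CL)_t+\wh{\deg}\det H^1(\CX,\omega_\CX\otimes\CL^\vee)_t)$ after invoking the duality $H^1(\CX,\CL)_t\cong H^1(\CX,\omega_\CX\otimes\CL^\vee)_t^{\vee}$ up to the relevant twist). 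Matching this against \eqref{eu} reduces the claim to an identity purely about arithmetic degrees of line bundles on $\Spec\CO$, with no torsion left over.

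What remains is to absorb the correction terms $-\tfrac12(\CL,f^*\omega_\CO)+\tfrac14(\omega_\CX,f^*\omega_\CO)$ and the discriminant terms. Here I would apply the absolute Riemann–Roch \eqref{arrf} (or rather its specializations to $\CL$, to $\omega_{\CX/\CO}\otimes\CL^\vee$, and to $\CO_\CX$) to convert $\wh{\deg}\det Rf_*$ of various bundles into Arakelov self-intersection numbers, then use the projection formula and bilinearity of $(\ ,\ )$ on $\wh{\Pic}(\CX)$ to simplify: the point is that $\omega_\CX = \omega_{\CX/\CO}\otimes f^*\omega_\CO$ and $(f^*\alpha, f^*\beta)=0$ for $\alpha,\beta\in\wh{\Pic}(\CO)$, while $(f^*\alpha,\CM)=\wh{\deg}(\alpha)\cdot\deg(\CM_F)$, so the cross terms collapse to exactly the stated combination. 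The main obstacle I anticipate is bookkeeping the metrics: one must check carefully that the $L^2$-metric defining $\det\nolimits_{L^2}Rf_*\CL$ (built from the Arakelov metric on $\omega_\CX$, i.e. on $\omega_{\CX/\CO}$ twisted by the trivial metric on $\omega_\CO$) agrees, place by place, with the metrics implicit in $h^0_\CO$ of $f_*\CL$ and of $f_*(\omega_{\CX/\CO}\otimes\CL^\vee)$ and with Serre duality \eqref{sd}, since an unnoticed normalization discrepancy at the complex places would shift the answer by a multiple of $\log 2$ or $\log\pi$. Once those normalizations are pinned down, the algebra is a routine manipulation of arithmetic intersection numbers.
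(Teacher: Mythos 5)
Your outline follows the paper's proof for most of the way: unwind \(\wh{\deg}\det_{L^2}Rf_*\CL\) via relative Serre duality, twist by \(\omega_\CO\) to replace \(\omega_{\CX/\CO}\) by \(\omega_\CX\), apply \eqref{rrc} to turn \(\wh{\deg}\det\) into \(\chi_\CO\) plus rank corrections, and compare with \eqref{eu}. The divergence is at the torsion step. The paper does \emph{not} invoke a duality between \(H^1(\CX,\CL)_t\) and \(H^1(\CX,\omega_\CX\otimes\CL^\vee)_t\); after comparing with \eqref{eu} it is left with the residual term \(\tfrac12\bigl(\wh{\deg}\det H^1(\CX,\omega_\CX\otimes\CL^\vee)_t-\wh{\deg}\det H^1(\CX,\CL)_t\bigr)-\chi_{\CX_\infty}(\CL_\infty)\chi_\CO(\CO)\), and then cancels it by running the same reduction with \(\CL\) replaced by \(\omega_\CX\otimes\CL^\vee\) and subtracting, using \eqref{arrf} together with \(\det\Delta_{\CL}=\det\Delta_{\Omega\otimes\CL^\vee}\) to equate \(\wh{\deg}\det_{L^2}Rf_*\CL-\tfrac12(\CL,f^*\omega_\CO)\) with its ``Serre-dual'' twin. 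This symmetrization is what actually delivers the intersection-number corrections, and as a by-product it proves the equality of the two torsion degrees.

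Your proposal instead asserts \(H^1(\CX,\CL)_t\cong H^1(\CX,\omega_\CX\otimes\CL^\vee)_t^{\vee}\) ``up to the relevant twist'' to get rid of the torsion discrepancy up front. That statement is in fact correct --- it comes from the full derived Grothendieck--Serre duality for \(f\), which identifies the torsion of \(H^1(\CX,\omega_{\CX/\CO}\otimes\CL^\vee)\) with \(\Ext^1_\CO(H^1(\CX,\CL)_t,\CO)\), a module of the same length as \(H^1(\CX,\CL)_t\); twisting by the invertible \(\omega_\CO\) leaves the length unchanged. But it is a genuinely extra input: the paper only uses the \(H^0/H^1\) duality on free parts and deliberately routes around the torsion comparison. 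If you take your route you must actually establish this torsion duality rather than hedge with ``up to the relevant twist,'' since otherwise the ``no torsion left over'' claim is unsupported. Also note that once the torsion cancellation is granted, \eqref{arrf} is not needed at all: the residual rank terms equal \(-\chi_{\CX_\infty}(\CL_\infty)\chi_\CO(\CO)\), and the projection formula alone converts \(-\tfrac12(\CL,f^*\omega_\CO)+\tfrac14(\omega_\CX,f^*\omega_\CO)\) into \(-\chi_{\CX_\infty}(\CL_\infty)\chi_\CO(\CO)\); your appeal to \eqref{arrf} in the last step mixes your route with the paper's and is superfluous in yours. In short: your plan is sound and arguably more transparent, but it trades the paper's self-contained symmetrization for an unproved (though true) torsion-duality statement, which you should supply.
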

\begin{proof}
Note that
  \[\begin{aligned}
    \wh{\deg}\det\nolimits_{L^2} Rf_* \CL=\wh{\deg}\det\nolimits_{L^2}H^0(\CX,\CL)-\wh{\deg}\det\nolimits_{L^2}H^1(\CX,\CL)_f-\wh{\deg}\det H^1(\CX,\CL)_t.
  \end{aligned}\]

Recall relative Serre duality gives an isomorphism of Hermitian coherent $\CO$-modules:
  \[H^1(\CX,\CL)^\vee\simeq H^0(\CX,\omega_{\CX/\CO}\otimes \CL^\vee).\]
  Also note that for coherent Hermitian $\CO$-module $V$ and  a fractional ideal $W$ of $F$ with standard metric at $\infty$, \[\det(V\otimes W)=W^{\otimes\rank_{\CO}V}\otimes \det V.\] Together with the fact that \[(f^*\omega_\CO)^{\vee}\otimes \omega_\CX=\omega_{\CX/\CO},\] one have 
  \[\begin{aligned}
    \wh{\deg}\det\nolimits_{L^2} Rf_* \CL=&\wh{\deg}\det\nolimits_{L^2} H^0(\CX,\CL)+\wh{\deg}\det\nolimits_{L^2} H^0(\CX,\omega_{\CX}\otimes \CL^{\vee})\\
    -&\wh{\deg}\det H^1(\CX,\CL)_t-\rank_{\CO}H^1(\CX,\CL)\cdot\wh{\deg}\omega_\CO.
  \end{aligned}\]
  By Riemann-Roch for Hermitian coherent modules on $\Spec \CO$ (cf.~\eqref{rrc}), we have 
  \[\begin{aligned}
  \wh{\deg}\det\nolimits_{L^2} Rf_* \CL=&\chi_\CO(H^0(\CX,\CL))-\wh{\deg}\det H^1(\CX,\CL)_t+\chi_\CO( H^0(\CX,\omega_{\CX}\otimes \CL^\vee))\\
  -&\rank_{\CO}H^0(\CX,\CL)\cdot\chi(\CO)-\rank_{\CO}H^0(\CX,\omega_{\CX}\otimes \CL^\vee)\cdot\chi(\CO)\\
  -&\rank_{\CO}H^1(\CX,\CL)\cdot \wh{\deg}\omega_\CO
  \end{aligned}\]
  Now it follows from Riemann-Roch for Riemann surfaces, the fact that $\wh{\deg}\omega_\CO=-2\chi(\CO)$ and definition of $\chi_{\CX}(\CL)$ (cf.~\eqref{eu}), we have
  \begin{equation}\label{eq1}\begin{aligned}
    \wh{\deg}\det\nolimits_{L^2} Rf_* \CL=&\chi_{\CX}(\CL)-\chi_{\CX_\infty}(\CL_\infty)\chi_{\CO}(\CO)\\
    +&\frac{1}{2}\left(\wh{\deg}\det H^1(\CX,\omega_{\CX}\otimes \CL^\vee)_t-\wh{\deg}\det H^1(\CX,\CL)_t\right).
  \end{aligned}\end{equation} 
It follows from arithmetic Riemann-Roch (cf.~\eqref{arrf}) and \eqref{lap} that
  \begin{equation}\label{eq2}
    \begin{aligned}
     &\wh{\deg}\det\nolimits_{L^2}Rf_*\CL-\frac{1}{2}(\CL,f^*\omega_{\CO})\\
    =&\wh{\deg}\det\nolimits_{L^2}f_*(\omega_{\CX}\otimes\CL^\vee)-\frac{1}{2}(\omega_{\CX}\otimes\CL^\vee,f^*\omega_{\CO}). 
    \end{aligned}\end{equation}
  It follows from equations \eqref{eq1} and \eqref{eq2} that 
  \[\begin{aligned}
    &-\chi_{\CX_\infty}(\CL_\infty)\chi_{\CO}(\CO)+\frac{1}{2}\left(\wh{\deg}\det H^1(\CX,\omega_{\CX}\otimes \CL^\vee)_t-\wh{\deg}\det H^1(\CX,\CL)_t\right)\\
    =&\frac{1}{2}(\CL,f^*\omega_{\CO})-\frac{1}{4}(\omega_{\CX},f^*\omega_\CO)
  \end{aligned}\] and hence 
\[\begin{aligned}
 \chi_{\CX}(\CL)=&\wh{\deg}\det\nolimits_{L^2}Rf_*\CL-\frac{1}{2}(\CL,f^*\omega_{\CO})+\frac{1}{4}(\omega_{\CX},f^*\omega_{\CO})
\end{aligned}
\]holds.
\end{proof}

\subsection{Arithmetic Riemann-Roch formula for numerical  cohomology}\label{aarr}
In this section, we consider Riemann-Roch formula for numerical cohomology.

Let $\CX\ra \Spec\CO$ be an arithmetic surface and equip $\omega_{\CX}$ with Arakelov metric. For $\CL$ a Hermitian line bundle, let $h^i_\CX(\CL)$ be the numerical cohomology of $\CL$ and let \[\chi_{\CX}(\CL)=h^0_{\CX}(\CL)-h^1_{\CX}(\CL)+h^2_\CX(\CL)\] be the Euler characteristic defined in \S\ref{effsur}.
\begin{thm}\label{main:RR}Let $\CL$ be a Hermitian line bundle on arithmetic surface $\CX$, then
\[\left(\chi_\CX(\CL)+\frac{1}{2}\log \det \Delta_{\CL_\infty}\right)=\frac{1}{2}(\CL,\CL\otimes \omega_{\CX}^\vee)+\left(\chi_\CX(\CO_{\CX})+\frac{1}{2}\log \det\Delta_{\CO_{\CX_\infty}} \right)\]
\end{thm}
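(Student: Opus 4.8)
The plan is to reduce Theorem~\ref{main:RR} to Proposition~\ref{com} together with the relative arithmetic Riemann--Roch formula \eqref{arrf} and the behaviour of the analytic torsion under Serre duality \eqref{lap}. Concretely, Proposition~\ref{com} expresses $\chi_\CX(\CL)$ in terms of $\wh{\deg}\det_{L^2}Rf_*\CL$ and two intersection numbers involving $f^*\omega_\CO$; applying the same formula to $\CO_\CX$ gives an analogous expression for $\chi_\CX(\CO_\CX)$. Subtracting, the terms $\tfrac14(\omega_\CX,f^*\omega_\CO)$ cancel, and (using $(\CO_\CX,f^*\omega_\CO)=0$) we are left with
\[
\chi_\CX(\CL)-\chi_\CX(\CO_\CX)=\wh{\deg}\det\nolimits_{L^2}Rf_*\CL-\wh{\deg}\det\nolimits_{L^2}Rf_*\CO_\CX-\tfrac12(\CL,f^*\omega_\CO).
\]

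Next I would pass from the $L^2$-metric to the Quillen metric. By the definition \eqref{l2q} of the Quillen metric, relation \eqref{lap}, and compatibility of the fibrewise Quillen metric with the one on $\det Rf_*$, one has
\[
\wh{\deg}\det\nolimits_{L^2}Rf_*\CL=\wh{\deg}\det\nolimits_{Q}Rf_*\CL+\tfrac12\log\det\Delta_{\CL_\infty},
\]
and similarly for $\CO_\CX$. Substituting, the difference $\wh{\deg}\det_{L^2}Rf_*\CL-\wh{\deg}\det_{L^2}Rf_*\CO_\CX$ becomes $\wh{\deg}\det_Q Rf_*\CL-\wh{\deg}\det_Q Rf_*\CO_\CX+\tfrac12\log\det\Delta_{\CL_\infty}-\tfrac12\log\det\Delta_{\CO_{\CX_\infty}}$. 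Now apply the arithmetic Riemann--Roch formula \eqref{arrf} (valid for the Quillen, equivalently Faltings, metric, since the two differ only by archimedean factors that cancel in the difference) to both $\CL$ and $\CO_\CX$:
\[
\wh{\deg}\det\nolimits_{Q}Rf_*\CL-\wh{\deg}\det\nolimits_{Q}Rf_*\CO_\CX=\tfrac12(\CL,\CL\otimes\omega_{\CX/\CO}^\vee).
\]
Finally I would rewrite $\tfrac12(\CL,\CL\otimes\omega_{\CX/\CO}^\vee)-\tfrac12(\CL,f^*\omega_\CO)$ as $\tfrac12(\CL,\CL\otimes\omega_{\CX/\CO}^\vee\otimes f^*\omega_\CO^\vee)=\tfrac12(\CL,\CL\otimes\omega_\CX^\vee)$ using $\omega_\CX=\omega_{\CX/\CO}\otimes f^*\omega_\CO$ and bilinearity of the Arakelov pairing; collecting the torsion/analytic-torsion terms on each side yields exactly the claimed identity.

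The bookkeeping here is mostly routine once Proposition~\ref{com} is in hand, so the main point requiring care — and the step I would expect to be the real obstacle — is making sure the archimedean normalizations match: namely that the $L^2$-to-Quillen conversion factor $\det\Delta_{\CL_v}$ used fibrewise is the same object appearing in the passage from $\det_F$ to $\det_Q$ in Section~\ref{arr}, that the volume/$\pi^{-g}$ factors relating Faltings and Quillen metrics genuinely cancel in the difference $\CL$ vs.\ $\CO_\CX$ (they do, since $\rank H^\bullet$ contributions and $\vol(X_v(\BC))$ are $\CL$-independent), and that the identity $\omega_\CX=\omega_{\CX/\CO}\otimes f^*\omega_\CO$ holds as \emph{Hermitian} line bundles with the chosen metrics (the Arakelov metric on $\omega_{\CX/\CO}$ together with the standard metric on $\omega_\CO$). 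Granting these compatibilities — all of which are recorded in Sections~\ref{ssd}--\ref{arr} — the theorem follows by the chain of substitutions above.
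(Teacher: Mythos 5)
Your plan is exactly the paper's: combine Proposition~\ref{com} (applied to $\CL$ and to $\CO_\CX$), the $L^2$-to-Quillen conversion, and the relative Riemann--Roch \eqref{arrf}, then use bilinearity of the pairing and $\omega_\CX = \omega_{\CX/\CO}\otimes f^*\omega_\CO$. The observation that the Faltings-vs.-Quillen archimedean factors cancel in the difference $\CL$ vs.\ $\CO_\CX$ is also correct and is why one can use \eqref{arrf} for either metric.

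There is, however, a sign error at the $L^2$-to-Quillen step that propagates to the conclusion. From \eqref{l2q}, $\pair{\ ,\ }_Q=(\det\Delta_{\CL_v})^{-1}\pair{\ ,\ }_{L^2}$, the Quillen norm of a section is \emph{smaller} than the $L^2$-norm (for $\det\Delta>1$), hence $\wh{\deg}\det_Q$ is \emph{larger}; concretely
\[
\wh{\deg}\det\nolimits_{Q}Rf_*\CL=\wh{\deg}\det\nolimits_{L^2}Rf_*\CL+\sum_{v\mid\infty}\tfrac{\epsilon_v}{2}\log\det\Delta_{\CL_v},
\]
which is the relation the paper itself records in the proof of Proposition~\ref{Noe}. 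You wrote $\wh{\deg}\det_{L^2}=\wh{\deg}\det_Q+\tfrac12\log\det\Delta$, i.e.\ the opposite sign. Carrying your sign through the substitution gives $\chi_\CX(\CL)-\tfrac12\log\det\Delta_{\CL_\infty}=\tfrac12(\CL,\CL\otimes\omega_\CX^\vee)+\chi_\CX(\CO_\CX)-\tfrac12\log\det\Delta_{\CO_{\CX_\infty}}$, which is not the claimed identity. With the corrected sign the same chain of substitutions does yield Theorem~\ref{main:RR} verbatim: from Proposition~\ref{com} one gets $\chi_\CX(\CL)-\chi_\CX(\CO_\CX)=\wh{\deg}\det_{L^2}Rf_*\CL-\wh{\deg}\det_{L^2}Rf_*\CO_\CX-\tfrac12(\CL,f^*\omega_\CO)$; replacing each $\wh{\deg}\det_{L^2}$ by $\wh{\deg}\det_Q-\tfrac12\log\det\Delta$ and applying \eqref{arrf} gives $\chi_\CX(\CL)+\tfrac12\log\det\Delta_{\CL_\infty}-\chi_\CX(\CO_\CX)-\tfrac12\log\det\Delta_{\CO_{\CX_\infty}}=\tfrac12(\CL,\CL\otimes\omega_{\CX/\CO}^\vee)-\tfrac12(\CL,f^*\omega_\CO)=\tfrac12(\CL,\CL\otimes\omega_\CX^\vee)$. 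So the proposal needs only this sign fix; everything else, including the use of $(\CO_\CX,f^*\omega_\CO)=0$ and the cancellation of the $\tfrac14(\omega_\CX,f^*\omega_\CO)$ terms, is sound and matches the paper's (terser) proof.
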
 

\begin{proof} 
Recall that the Quillen metric and the $L^2$ metric are differed by regularized determinant of Laplacian (cf.~\eqref{l2q}), thus the following result follows from relative version of arithmetic Riemann-Roch (cf.~\eqref{arrf}) and Proposition~\ref{com}.
\end{proof}

\section{An upper bound for $(\omega_{\CX/\CO},\omega_{\CX/\CO})$}

In this section, let $f:\CX\ra \Spec\CO$ be an arithmetic surface. The main results are the upper bounds for $(\omega_{\CX/\CO},\omega_{\CX/\CO})$ (cf.~Theorem~\ref{main:2}~and~Theorem \ref{uppself1}).

\subsection{Noether formula for arithmetic surfaces}

In this section, we recall Noether formula for arithmetic surfaces, which is the starting point for analyzing the upper bound of $(\omega_{\CX/\CO},\omega_{\CX/\CO})$. 

The following Noether formula follows from the Noether formula of Faltings \cite{Faltings} and Moret-Bailly \cite{Moret-Bailly:1989}.

Recall that \[\chi_{\CX}(\CO_\CX)\] is the Euler characteristic for numerical cohomology   defined in \S\ref{effsur} and \[\det \Delta_{\CO_{\CX_\infty}}:=\prod_{v|\infty}\det \Delta_{\CO_{\CX_v(\BC)}}\]is the analytic torsion given in \S\ref{Sqmrr}.
\begin{prop}\label{Noe}
Let $f:\CX\ra \Spec\CO$ be an arithmetic surface, then 
\begin{equation}12\chi_{\CX}(\CO_{\CX})+\sum_{v|\infty}6{\epsilon_v}\log \det \Delta_{\CX_v(\BC)}=(\omega_{\CX},\omega_{\CX})+\sum_{v<\infty}\epsilon_v\delta_v+B(g)\end{equation}where $B(g)=O(g\cdot ([F:\BQ]+\log|D_F|))$ is an explicit constant and $\delta_v$ is the Artin conductor of $\CX_v$ for $v$ a finite place of $F$.
\end{prop}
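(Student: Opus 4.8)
The plan is to derive the formula from the classical Noether formula of Faltings~\cite{Faltings} and Moret-Bailly~\cite{Moret-Bailly:1989}, translating it into numerical cohomology by means of Proposition~\ref{com} and the metric comparisons recalled in \S\ref{arr} and \S\ref{Sqmrr}. Recall that the Faltings--Moret-Bailly Noether formula has the shape
\[12\,\wh{\deg}\det\nolimits_{F}Rf_*\omega_{\CX/\CO}=(\omega_{\CX/\CO},\omega_{\CX/\CO})+\sum_{v<\infty}\epsilon_v\delta_v+\sum_{v|\infty}\epsilon_v\,\delta_{\mathrm{Fal}}(\CX_v)+c_0(g),\]
where $\delta_{\mathrm{Fal}}(\CX_v)$ is the Faltings delta-invariant of $\CX_v(\BC)$ equipped with the Arakelov metric and $c_0(g)=O(g[F:\BQ])$ gathers the universal $g\log(2\pi)$-type constants.

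First I would rewrite the left-hand side in terms of $\chi_{\CX}(\CO_{\CX})$. Grothendieck--Serre duality provides a canonical isomorphism $\det Rf_*\omega_{\CX/\CO}\simeq\det Rf_*\CO_{\CX}$, and since the factor relating the Faltings and Quillen metrics in \S\ref{arr} depends only on $\CX_v(\BC)$ and on $g$ (not on the line bundle), this isomorphism is an isometry for the Faltings metrics; hence $\wh{\deg}\det\nolimits_F Rf_*\omega_{\CX/\CO}=\wh{\deg}\det\nolimits_F Rf_*\CO_{\CX}$. Composing the Faltings--Quillen comparison of \S\ref{arr} with the Quillen--$L^2$ comparison \eqref{l2q} (for $\CL=\CO_{\CX}$ the relevant Laplacian is the one on functions, with zeta function $\zeta_{\CX_v(\BC)}$) and then applying Proposition~\ref{com} with $\CL=\CO_{\CX}$ --- using $(\CO_{\CX},f^*\omega_{\CO})=0$ --- I obtain, schematically,
\[12\,\wh{\deg}\det\nolimits_F Rf_*\omega_{\CX/\CO}=12\,\chi_{\CX}(\CO_{\CX})+\sum_{v|\infty}\bigl(12\log\det\Delta_{\CX_v(\BC)}+6g\log\pi-6\log\vol(\CX_v(\BC))\bigr)-3(\omega_{\CX},f^*\omega_{\CO}),\]
with the precise signs and constants read off from the two metric comparisons.

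Next I would rewrite the archimedean terms on the right-hand side of the Noether formula. The Faltings delta-invariant $\delta_{\mathrm{Fal}}(\CX_v)$ is expressible, up to a constant depending only on $g$, through the analytic torsion $\det\Delta_{\CX_v(\BC)}$ of the Arakelov Laplacian and the Arakelov volume $\vol(\CX_v(\BC))$; this is exactly the archimedean quantity whose lower bound, due to Wilms~\cite{Wilms:2017}, is used later for the application, and it encodes the discrepancy between the Faltings and the Deligne/Gillet-Soul\'e normalizations of arithmetic Riemann-Roch. Substituting this and equating the two expressions for $12\,\wh{\deg}\det\nolimits_F Rf_*\omega_{\CX/\CO}$, the $\log\vol(\CX_v(\BC))$ and the $g\log\pi$ contributions cancel and the coefficient of $\log\det\Delta_{\CX_v(\BC)}$ organizes into $6\epsilon_v$. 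Finally, the projection formula and $\wh{\deg}\omega_{\CO}=\log|D_F|$ give $(\omega_{\CX},f^*\omega_{\CO})=(2g-2)\log|D_F|$ and $(\omega_{\CX},\omega_{\CX})-(\omega_{\CX/\CO},\omega_{\CX/\CO})=(4g-4)\log|D_F|$, so $(\omega_{\CX/\CO},\omega_{\CX/\CO})$ may be traded for $(\omega_{\CX},\omega_{\CX})$ at the price of an $O(g\log|D_F|)$ term; absorbing $c_0(g)$, the residual $g\log\pi$ constant, the genus-constant coming from the delta-invariant, and these discriminant corrections into one explicit $B(g)=O\bigl(g([F:\BQ]+\log|D_F|)\bigr)$ yields the claim.

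The step I expect to be the main obstacle is the archimedean constant bookkeeping: one must chase all the $\pi$, $2\pi$ and $g$ factors through the three metrics and through the normalization of $\delta_{\mathrm{Fal}}$, and --- the delicate point --- keep track of the distinct conventions at real and complex places, so that the volume terms cancel exactly and the coefficient of $\log\det\Delta_{\CX_v(\BC)}$ comes out to precisely $6\epsilon_v$ rather than some other constant multiple. This in turn requires having the relation between $\delta_{\mathrm{Fal}}(\CX_v)$ and the Arakelov-Laplacian analytic torsion in fully explicit form, with its dependence on $g$ controlled so that it fits inside $B(g)$.
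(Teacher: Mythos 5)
Your approach follows the same route as the paper's proof: start from the Faltings--Moret-Bailly Noether formula, convert the Faltings-metric determinant to the Quillen metric and then to the $L^2$ metric using the comparisons of \S\ref{arr} and \eqref{l2q}, apply Proposition~\ref{com} with $\CL=\CO_{\CX}$ (using $(\CO_{\CX},f^*\omega_{\CO})=0$), substitute the expression for the archimedean Faltings delta-invariant in terms of analytic torsion and Arakelov volume, and finally trade $(\omega_{\CX/\CO},\omega_{\CX/\CO})$ for $(\omega_{\CX},\omega_{\CX})$. All of these ingredients are exactly what the paper uses, so the road map is correct (and your observation that $\wh{\deg}\det\nolimits_F Rf_*\omega_{\CX/\CO}=\wh{\deg}\det\nolimits_F Rf_*\CO_{\CX}$ is a harmless reformulation of the paper's starting point).

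That said, your schematic identity for $12\,\wh{\deg}\det\nolimits_F Rf_*\CO_{\CX}$ is not merely imprecise about constants --- it has structural errors that would derail the computation if carried through. Each archimedean summand must carry a factor $\epsilon_v$, which you drop. More importantly, the two comparisons Faltings$\,\to\,$Quillen and Quillen$\,\to L^2$ contribute $+\tfrac{\epsilon_v}{2}\log\bigl(\pi^{-g}(\det\Delta_{\CX_v(\BC)})^{-1}\vol(\CX_v(\BC))\bigr)$ and $+\tfrac{\epsilon_v}{2}\log\det\Delta_{\CX_v(\BC)}$ respectively, so the $\log\det\Delta$ terms \emph{cancel}; one is left with
\[12\,\wh{\deg}\det\nolimits_F Rf_*\CO_{\CX}=12\,\chi_{\CX}(\CO_{\CX})-3(\omega_{\CX},f^*\omega_{\CO})+6\sum_{v|\infty}\epsilon_v\bigl(\log\vol(\CX_v(\BC))-g\log\pi\bigr).\]
The $6\epsilon_v\log\det\Delta_{\CX_v(\BC)}$ appearing in the final statement arises solely from the substitution $\delta_v=-6\log\bigl(\det\Delta_{\CX_v(\BC)}/\vol(\CX_v(\BC))\bigr)+A_v(g)$ at archimedean places, not from the metric conversion. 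Your version keeps a spurious $12\log\det\Delta$ and carries $+6g\log\pi-6\log\vol$ with the signs reversed, so the cancellation of $\log\vol$ you announce would not actually occur, and the coefficient of $\log\det\Delta$ would come out as a constant $12$ rather than $6\epsilon_v$ (correct for complex places only by coincidence). As you anticipated, the archimedean bookkeeping is the crux, and it needs to be redone with the $\epsilon_v$'s and signs tracked carefully --- but once that is fixed, your argument is the paper's.
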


\begin{remark}\label{Noermk}
  \
  \begin{itemize}
    \item [(i)]As in geometric case, $\chi_{\CX}(\CO_{\CX})$ is closely related to $\wh{\deg}f_*\omega_{\CX/\CO}$ via \eqref{eu} and the fact that \[\chi_{\CO}(f_*\omega_{\CX})-\chi_{\CO}(f_*\omega_{\CX/\CO})
  =(2g-2)\wh{\deg}\omega_{\CO}
 .\]
 \item [(ii)]Note that $|(\omega_{\CX/\CO},\omega_{\CX/\CO})-(\omega_{\CX},\omega_{\CX})|=O(g\log|D_F|)$, similar formula holds for $(\omega_{\CX/\CO},\omega_{\CX/\CO})$.
  \end{itemize}
\end{remark}
\begin{proof}
Recall that Noether formula states that (cf.~\cite[Sec.~4.6]{Soule})
\begin{equation}\label{noe}12\wh{\deg}\det\nolimits_{F}Rf_*\CO_{\CX}=(\omega_{\CX/\CO},\omega_{\CX/\CO})+\sum_{v}\epsilon_v\delta_v-\sum_{v|\infty}\epsilon_v4g\log 2\pi,\end{equation}where 
$v$ runs over all places of $F$, $\delta_v$ is the Artin conductor of $\CX_v$ for $v$ finite and $\delta_v$ is the Faltings $\delta$-invariant for $v|\infty$.

It is showed in \cite[Sec.~4.5]{Soule} that the Faltings metric and Quillen metric on $Rf_*\CL$ for a Hermitian line bundle $\CL$ on $\CX$ are connected by the following 
\begin{equation}
  \label{FaQ}\wh{\deg}\det\nolimits_{Q} Rf_*\CL=\wh{\deg}\det\nolimits_F Rf_*\CL-\sum_{v|\infty}\epsilon_v\log| \pi^{-g}(\det \Delta_{\CX_v(\BC)})^{-1}\vol(\CX_v(\BC))|^{1/2}\end{equation}where $\det \Delta_{\CX_v(\BC)}:=\det \Delta_{\CO_{\CX_v(\BC)}}$ and the volume is with respect to Arakelov metric. Furthermore, the Faltings invariant $\delta_v$ for $v|\infty$ and the analytic torsion are connected by the following (cf.~\cite[Sec.~4.6]{Soule},~\cite{Bost:1986})
\begin{equation}\label{daq}\delta_v=-6\log \frac{\det \Delta_{\CX_v(\BC)}}{\vol(\CX_v(\BC))} +A_v(g)\end{equation}where $A_v(g)=O(g)$ is an explicit constant only depends on $g$.
Combine equations \eqref{noe}, \eqref{FaQ}, \eqref{daq}, one has the following version of Noether formula in terms of Quillen metric 
\begin{equation}12\wh{\deg}\det\nolimits_{Q} Rf_*\CO_{\CX}=(\omega_{\CX/\CO},\omega_{\CX/\CO})+\sum_{v<\infty}\epsilon_v\delta_v+\sum_{v|\infty}\epsilon_v(A_v(g)-4g\log 2\pi+6g\log\pi)\end{equation}
Recall that by definition of Quillen metric, we have
\[\wh{\deg}\det\nolimits_{Q} Rf_*\CO_{\CX}=\wh{\deg}\det\nolimits_{L^2} Rf_*\CO_{\CX}+\sum_{v|\infty}\frac{\epsilon_v}{2}\log\det \Delta_{\CX_v(\BC)}\]
thus the result follows from Proposition~\ref{com} that we have
\begin{equation}12\chi_{\CX}(\CO_{\CX})+\sum_{v|\infty}6{\epsilon_v}\log \det \Delta_{\CX_v(\BC)}=(\omega_{\CX},\omega_{\CX})+\sum_{v<\infty}\epsilon_v\delta_v+B(g)\end{equation}where $B(g)=O(g\cdot ([F:\BQ]+\log|D_F|))$ given by $(f^*\omega_{\CO},\omega_{\CX})+\sum_{v|\infty}\epsilon_v(A_v(g)-4g\log 2\pi+6g\log\pi)$.
\end{proof}

\subsection{Upper bound for $(\omega_{\CX/\CO},\omega_{\CX/\CO})$ I}\label{uppI}

In this section, we obtain an upper bound for $(\omega_{\CX/\CO},\omega_{\CX/\CO})$ via the successive minima of $f_*\omega_{\CX/\CO}$ with respect to $L^2$-norm (cf.~Theorem~\ref{main:2}). As mentioned in the introduction and Remark~\ref{ns}, successive minima serve as the analogue of the slopes in the Harder-Narasimhan filtration in the geometric case.

Let briefly recall the $L^2$ norm on $f_*\omega_{\CX/\CO}$.

For $v|\infty$ an infinite place of $F$,
locally, let $a_v(z)$ be such that \[(f_1(z)dz,f_2(z)dz)\mapsto f_1(z)\ov{f_2(z)}a_v(z)\] gives Arakelov metric on $\omega_{\CX_v(\BC)}$. 
Then the Arakelov metric $\mu_{\Ar,v}$ on $\CX_v(\BC)$ given by \begin{equation}\label{arame}\frac{-1}{2 i}\frac{1}{a_v(z)}dz\wedge \ov{dz}.\end{equation} with $1$ has norm $\frac{1}{\sqrt{a_v(z)}}$ and the square of $L^2$-norm on $f_*\omega_{\CX_v(\BC)}$ is given by the integration of
\[\frac{-1}{2\pi i}f_1(z)\ov{f_2(z)}{dz\wedge \ov{dz}}.\]

In view of the Noether formula (cf.~Proposition~\ref{Noe}~and~Remark~\ref{Noermk}),
the upper bound for self-intersection number\[(\omega_{\CX/\CO},\omega_{\CX/\CO})\] reduces to upper bound for sum of Euler characteristic for numerical cohomology and analytic torsion.

Based on \eqref{eu}, to bound $\chi_{\CX}(\CO_{\CX})$ from above, it is enough to bound $\chi_{\CO}(f_*\CO_{\CX})$ and $\chi_{\CO}(f_*\omega_{\CX})$ from above.
Also recall that $\chi_{\CO}(f_*\omega_{\CX})$ and $\chi_{\CO}(f_*\omega_{\CX/\CO})$ are closely related via 
\begin{equation}\label{euabsre}\chi_{\CO}(f_*\omega_{\CX})-\chi_{\CO}(f_*\omega_{\CX/\CO})
  =(2g-2)\log |D_F|
 .\end{equation}

The following Lemma \ref{Min} is a direct consequence of Minkowski's second theorem (cf.~Proposition~\ref{Min2}).
\begin{lem}\label{Min}
Let $\lambda_i$, $i=1,\cdots, g$ be the successive minima of $f_*\omega_{\CX/\CO}$ with repsect to $L^2$-norm, then there exists an explicit constant $C=O(g\cdot([F:\BQ]\log g +|D_F|))$ such that 
 \[\chi_{\CO}(f_*\omega_{\CX/\CO})\leq [F:\BQ]\sum_{i=1}^{g}(-\log\lambda_i)+C.\]

\end{lem}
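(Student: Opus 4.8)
The plan is to apply Minkowski's second theorem (Proposition~\ref{Min2}) directly to the Hermitian vector bundle $\CF = f_*\omega_{\CX/\CO}$ equipped with the $L^2$-norm described above. First I would note that $f_*\omega_{\CX/\CO}$ is a locally free $\CO$-module of rank $r = \rank_{\CO} f_*\omega_{\CX/\CO} = g$ (the genus of $\CX_F$), since $H^0$ of the relative dualizing sheaf on each geometric fiber has dimension $g$ and flatness of $f$ together with cohomology-and-base-change gives local freeness after possibly passing to the generic point; in any case $f_*\omega_{\CX/\CO}$ is torsion-free over the Dedekind domain $\CO$, hence locally free, and its rank over $\CO$ is $g$. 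Thus $n := \rank_{\BZ} f_*\omega_{\CX/\CO} = [F:\BQ]\,g$.

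With this identification, Proposition~\ref{Min2} applied to $\CF = f_*\omega_{\CX/\CO}$ with $r = g$ gives immediately
\[
\chi_{\CO}(f_*\omega_{\CX/\CO}) \leq [F:\BQ]\sum_{i=1}^{g}(-\log \lambda_i) + c,
\]
where $\lambda_1,\dots,\lambda_g$ are the successive minima of $f_*\omega_{\CX/\CO}$ with respect to the $L^2$-norm, and $c = O(n\log r) = O([F:\BQ]\,g\,\log g)$ is the constant in Proposition~\ref{Min2}, namely $c = g[F:\BQ]\log 2 - \log(V_g^{r_1}(2V_{2g})^{r_2})$ with $r_1,r_2$ the number of real and complex places and $V_k$ the volume of the unit ball in $\BR^k$. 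Using the standard estimate $\log V_k = -\tfrac{k}{2}\log k + O(k)$, one checks $c = O([F:\BQ]\,g\,\log g)$, which is absorbed into a constant of the form $C = O(g\cdot([F:\BQ]\log g + |D_F|))$. So the statement follows essentially verbatim from Proposition~\ref{Min2} once the rank is identified.

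There is no real obstacle here; the only point requiring a word of care is the passage from the constant $c = O(n\log r)$ in Proposition~\ref{Min2} to the claimed shape $C = O(g\cdot([F:\BQ]\log g + |D_F|))$, which is a harmless weakening (the $|D_F|$ term is not even needed for this particular inequality, but is kept so that $C$ matches the constant appearing in the downstream application, where a term $O(g\log|D_F|)$ enters from comparing $\omega_{\CX/\CO}$ with $\omega_{\CX}$ via \eqref{euabsre}). Thus the proof is just: $f_*\omega_{\CX/\CO}$ is a rank-$g$ Hermitian bundle over $\Spec\CO$; invoke Proposition~\ref{Min2}; absorb the explicit constant.
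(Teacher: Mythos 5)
Your proof is correct and is precisely the intended argument: the paper itself presents Lemma~\ref{Min} as ``a direct consequence of Minkowski's second theorem (cf.~Proposition~\ref{Min2})'' with no further detail, which is exactly what you supply by applying Proposition~\ref{Min2} to $\CF=f_*\omega_{\CX/\CO}$ with $r=g$ and $n=g[F:\BQ]$. Your additional remarks --- that $f_*\omega_{\CX/\CO}$ is a rank-$g$ torsion-free $\CO$-module hence locally free, and that the $|D_F|$ term in the stated constant is a harmless slack kept for uniformity with the downstream Noether-formula estimates --- are accurate and only make the short argument more transparent.
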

Note that $f_*\CO_{\CX}=\CO$, but the Hermitian structure is closely to volume of $\CX_\infty$ with respect to Arakelov metric. We have the following holds:
\begin{lem}\label{aravol}
There exists an explicit constant $C=O([F:\BQ]+|D_F|)$ such that 
\[\chi_{\CO}(f_*\CO_{\CX})\leq-\frac{1}{2}\sum_{v|\infty}\epsilon_v\log\vol(\CX_v(\BC))+C,\]where $\vol(\CX_v(\BC))$ is the volume of $\CX_v(\BC)$ with respect to Arakelov metric $\mu_{\Ar,v}$.
\end{lem}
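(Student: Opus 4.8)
The plan is to compare the natural Hermitian structure on $f_*\CO_{\CX} = \CO$, coming from the $L^2$-pairing on $H^0(\CX_v(\BC),\CO) = \BC$ with respect to the Arakelov volume form $\mu_{\Ar,v}$, against the ``trivial'' Hermitian structure on $\CO$ that was used to normalize $\chi_\CO$ in \S\ref{efco}. Since $f_*\CO_{\CX}$ is the trivial rank-one module, the whole content is the metric: the constant function $1$ has $L^2$-norm squared equal to $\int_{\CX_v(\BC)} \mu_{\Ar,v} = \vol(\CX_v(\BC))$ at each infinite place $v$, whereas in the trivial structure $\|1\|_v^2 = \epsilon_v$ (this is $1$ for real $v$ and $2$ for complex $v$, matching the normalization fixed in \S\ref{efco}). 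First I would record that changing the metric on a Hermitian line bundle scales the arithmetic degree: concretely, $\wh{\deg}\det\nolimits_{L^2}(f_*\CO_{\CX}) = \wh{\deg}(\CO, \text{triv}) - \tfrac{1}{2}\sum_{v|\infty}\epsilon_v\log\vol(\CX_v(\BC)) + (\text{a bookkeeping constant comparing }\epsilon_v\text{ to the triv norm})$, where the bookkeeping constant is $O([F:\BQ])$ since each local discrepancy is $O(1)$.

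Next I would feed this into the Riemann--Roch formula \eqref{rrc} for Hermitian coherent modules over $\Spec\CO$: $\chi_\CO(f_*\CO_{\CX}) = \wh{\deg}\det f_*\CO_{\CX} + \rank_\CO(f_*\CO_{\CX})\cdot\chi_\CO(\CO)$, with $\rank_\CO = 1$ and $\chi_\CO(\CO) = -\log\sqrt{|D_F|}$. Combining the two displays gives exactly
\[\chi_\CO(f_*\CO_{\CX}) = -\tfrac{1}{2}\sum_{v|\infty}\epsilon_v\log\vol(\CX_v(\BC)) + O([F:\BQ]) + O(\log|D_F|),\]
which is even an equality up to the explicit error term; the stated inequality follows a fortiori. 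The error term is $O([F:\BQ] + |D_F|)$ as claimed (indeed $O([F:\BQ] + \log|D_F|)$, which is stronger, but $\log|D_F| \le |D_F|$).

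The only subtlety — and the place I would be most careful — is pinning down the precise normalization of the ``trivial'' metric and of the $L^2$-pairing so that the constants genuinely match, rather than being off by a universal factor. In particular one must check that the $L^2$-norm on $H^0$ induced in \S\ref{pre} (which carries a $\frac{-1}{2\pi i}$, and for complex places an extra factor reflecting $\epsilon_v = 2$) is consistent with the volume normalization $\vol(\CX_v(\BC)) = \int \mu_{\Ar,v}$ used in \eqref{arame}, and that the resulting discrepancy with $\|1\|_v^2 = \epsilon_v$ is a fixed number depending only on whether $v$ is real or complex. Once that local computation is done there is no remaining difficulty: everything else is the one-line Riemann--Roch bookkeeping above, so I do not expect a genuine obstacle, only the need for bookkeeping discipline with the archimedean normalization factors.
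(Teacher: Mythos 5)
Your proposal is essentially the same as the paper's proof: identify the $L^2$-norm of the constant section $1 \in f_*\CO_{\CX} = \CO$ as (a normalization constant times) $\vol(\CX_v(\BC))$ at each archimedean place, then apply the Riemann--Roch identity \eqref{rrc} to convert $\wh{\deg}\det f_*\CO_{\CX}$ into $\chi_{\CO}(f_*\CO_{\CX})$ up to the $\chi_{\CO}(\CO) = -\log\sqrt{|D_F|}$ term. The detour through the ``trivial'' metric is a harmless repackaging of computing $\wh{\deg}$ directly, and the normalization discrepancy you rightly flag (the paper records $\|1\|_v^2 = \epsilon_v\cdot\tfrac{1}{\pi}\int_{\CX_v(\BC)}\mu_{\Ar,v}$, so there is an extra $\epsilon_v/\pi$ versus your $\int\mu_{\Ar,v}$) contributes only $\tfrac{1}{2}\sum_{v|\infty}\epsilon_v\log(\pi/\epsilon_v) = O([F:\BQ])$, which is swallowed by the claimed error term.
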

\begin{proof}Recall that the Hermitian strcture on $f_*\CO_{\CX}=\CO$ is given by
  \[||1||_v^2=\epsilon_v\cdot \frac{1}{\pi }\int_{\CX_v(\BC)}\mu_{\Ar,v}.\]where $\mu_{\Ar,v}$ is the Arakelov metric on $\CX_{v(\BC)}$ as in \eqref{arame}. Thus the result follows from \eqref{rrc}.

\end{proof}
Combine with above ingredients and lower bound of Faltings $\delta$-invariants (cf.~\cite{Wilms:2017}), the following holds.
\begin{thm}\label{main:2}
Let $\lambda_i$, $i=1,\cdots, g$ be the successive minima of $f_*\omega_{\CX/\CO}$ with repsect to $L^2$-norm, then there exists an explicit constant $C=O(g\cdot([F:\BQ]\log g +|D_F|))$ such that 
\[(\omega_{\CX/\CO},\omega_{\CX/\CO})\leq 12[F:\BQ]\sum_{i=1}^{g}(-\log\lambda_i)+C.\]
\end{thm}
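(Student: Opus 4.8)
The plan is to combine the Noether formula of Proposition~\ref{Noe} with the three upper bounds established just before the statement (Lemma~\ref{Min}, Lemma~\ref{aravol}, and a lower bound for the Faltings $\delta$-invariant), and then cancel the analytic torsion terms against the volume terms. First I would rewrite the Noether formula as
\begin{equation*}
(\omega_{\CX},\omega_{\CX})=12\chi_{\CX}(\CO_{\CX})+\sum_{v|\infty}6\epsilon_v\log\det\Delta_{\CX_v(\BC)}-\sum_{v<\infty}\epsilon_v\delta_v-B(g),
\end{equation*}
and since $\delta_v\geq 0$ for finite $v$ (the Artin conductor is a nonnegative integer) I may drop the finite-place contribution to get an \emph{upper} bound. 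By Remark~\ref{Noermk}(ii) the difference between $(\omega_{\CX},\omega_{\CX})$ and $(\omega_{\CX/\CO},\omega_{\CX/\CO})$ is $O(g\log|D_F|)$, which is absorbed into the final constant, so it suffices to bound $12\chi_{\CX}(\CO_{\CX})+\sum_{v|\infty}6\epsilon_v\log\det\Delta_{\CX_v(\BC)}$ from above.

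Next I would expand $\chi_{\CX}(\CO_{\CX})$ using \eqref{eu}: it equals $\chi_{\CO}(f_*\CO_{\CX})+\chi_{\CO}(f_*\omega_{\CX})$ up to the torsion terms $\tfrac12(\wh{\deg}\det H^1(\CX,\CO_{\CX})_t+\wh{\deg}\det H^1(\CX,\omega_{\CX})_t)$, which are $\leq 0$ since arithmetic degrees of torsion modules are nonpositive, so they too can be dropped for an upper bound. Then I apply \eqref{euabsre} to replace $\chi_{\CO}(f_*\omega_{\CX})$ by $\chi_{\CO}(f_*\omega_{\CX/\CO})+(2g-2)\log|D_F|$, bound the latter by Lemma~\ref{Min} to get $[F:\BQ]\sum_{i=1}^{g}(-\log\lambda_i)+C$, and bound $\chi_{\CO}(f_*\CO_{\CX})$ by Lemma~\ref{aravol} to get $-\tfrac12\sum_{v|\infty}\epsilon_v\log\vol(\CX_v(\BC))+C$. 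Putting these together:
\begin{equation*}
(\omega_{\CX/\CO},\omega_{\CX/\CO})\leq 12[F:\BQ]\sum_{i=1}^{g}(-\log\lambda_i)+\sum_{v|\infty}6\epsilon_v\log\frac{\det\Delta_{\CX_v(\BC)}}{\vol(\CX_v(\BC))}+C'.
\end{equation*}

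The crux is then to bound the remaining archimedean term $\sum_{v|\infty}6\epsilon_v\log\bigl(\det\Delta_{\CX_v(\BC)}/\vol(\CX_v(\BC))\bigr)$ from above by $O(g\log g)$ (per place). By \eqref{daq} this quantity equals $-\sum_{v|\infty}\epsilon_v\delta_v+\sum_{v|\infty}\epsilon_v A_v(g)$ with $A_v(g)=O(g)$, so an \emph{upper} bound for it is exactly a \emph{lower} bound for the Faltings $\delta$-invariant $\delta_v$ — and this is precisely the content of Wilms's theorem \cite{Wilms:2017}, which gives $\delta_v\geq -C''g\log g$ (or a comparable explicit bound). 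I expect this last step to be the main obstacle in the sense that it is the one genuinely analytic input: everything else is bookkeeping and positivity/negativity of various arithmetic-degree contributions, whereas controlling $\det\Delta/\vol$ requires the nontrivial spectral-geometric estimate. Collecting all the $O(g\cdot([F:\BQ]\log g+|D_F|))$ error terms (from $B(g)$, from Lemmas~\ref{Min} and~\ref{aravol}, from the $(2g-2)\log|D_F|$ shift, from Remark~\ref{Noermk}(ii), and from Wilms's constant) into a single constant $C$ yields the claimed inequality.
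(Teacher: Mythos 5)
Your proof follows the paper's argument in every essential step: apply the Noether formula from Proposition~\ref{Noe}, discard the nonnegative finite-place Artin conductors $\delta_v$, expand $\chi_{\CX}(\CO_{\CX})$ via \eqref{eu} and \eqref{euabsre}, bound $\chi_{\CO}(f_*\omega_{\CX/\CO})$ by Lemma~\ref{Min} and $\chi_{\CO}(f_*\CO_{\CX})$ by Lemma~\ref{aravol}, and then use \eqref{daq} together with Wilms's lower bound for the Faltings $\delta$-invariant to control the residual $\log\bigl(\det\Delta_{\CX_v(\BC)}/\vol(\CX_v(\BC))\bigr)$ term, which the paper estimates as $O(g[F:\BQ])$ since $\delta_v>-2g\log(2\pi^4)$. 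One small correction: $\wh{\deg}\det$ of a torsion $\CO$-module equals the logarithm of its cardinality and is therefore \emph{nonnegative}, not nonpositive, but your conclusion that these terms may be dropped for an upper bound remains valid because they enter \eqref{eu} with a minus sign.
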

\begin{proof}

It follows from Noether formula (cf.~Proposition~\ref{Noe}~and~Remark~\ref{Noermk}) that
\[(\omega_{\CX/\CO},\omega_{\CX/\CO})\leq 12\chi_{\CX}(\CO_{\CX})+\sum_{v|\infty}6{\epsilon_v}\log \det \Delta_{\CX_v(\BC)}+O(g\cdot ([F:\BQ]+\log|D_F|))\] 

It follows from \eqref{eu}, \eqref{euabsre},Lemma \ref{Min} and Lemma \ref{aravol}, we have 
\[\begin{aligned}
  &12\chi_{\CX}(\CO_{\CX})+\sum_{v|\infty}6{\epsilon_v}\log \det \Delta_{\CX_v(\BC)}\\
  =&6\sum_{v|\infty}\epsilon_v \log \frac{\det \Delta_{\CX_v(\BC)}}{\vol(\CX_v(\BC))} - 12[F:\BQ]\sum_{i=1}^{g}\log\lambda_i+O(g\cdot ([F:\BQ]\log g+\log|D_F|))
\end{aligned}\]
Note that it follows from \eqref{daq} and lower bound of Faltings $\delta$-invariant (cf.~\cite[Cor.~1.2]{Wilms:2017})
\[\delta_v>-2g\log(2\pi^4),\] that 
\[\sum_{v|\infty}\epsilon_v \log \frac{\det \Delta_{\CX_v(\BC)}}{\vol(\CX_v(\BC))}=O(g[F:\BQ]),\]thus the result follows.
\end{proof}

\subsection{Lower bound for minimum of $f_*\omega_{\CX/\CO}$}\label{Upper1}
In this section, we assume the following 
\begin{itemize}
  \item [\tiny{$\bullet$}] $f:\CX\ra \Spec\CO$ admits a $\CO$-section $P$,
  \item [\tiny{$\bullet$}] $g\geq 2$.
\end{itemize}
The main result is an lower bound of minimum in $f_*\omega_{\CX/\CO}$ (cf.~Proposition~\ref{mainrpt}).

Let us first recall some invariants related to $P_\infty\in \CX_{\infty}$.

For each infinite place $v$ of $F$, we have a compact Riemann surface $\CX_v(\BC)$ of genus $g>1$. Then $\CX_v(\BC)\simeq \BH/\Gamma_v$, where $\BH$ is the upper half plane and $\Gamma_v\subset \PSL_2(\BR)$ is some torsion-free Fuchsian subgroup of the first kind that is isomorphic to the fundamental group of $\CX_v(\BC)$. 
Let $\pi_v: \BH\ra \CX_v(\BC)$ be the universal covering map.
 Let \[\mu_{\BH}(z)=\frac{i}{2}\frac{dz\wedge\ov{dz}}{\Im(z)^2}=\frac{dx\wedge dy}{y^2}\] be the hyperbolic measure on $\BH$ and $d_\BH$ is the hyperbolic distance. Given $z\in \BH$, recall that the (closure of) Dirichlet fundamental domain of $\CX_v(\BC)$ relevant to $z$ is given by the following convex compact subset
 \[D_{z,\Gamma_v}:=\{\tau\in \BH\ |\ d_{\BH}(z,\tau)\leq d_{\BH}(z,\gamma\tau),\ \forall \gamma\in \Gamma_v\}.\]  Let $r_z\in \BR_{>0}$ be maximal such that ${D_{z,\Gamma_v}}$ contains the ball $B(r_z,z)=\{\tau\ |\ |\tau-z|\leq r_z\}$.
 Choose a set of representatives $(z_v)_{v|\infty}\in (\BH)_{v|\infty}$ of $P_\infty$. 
 
 It turns out that $\log||\cdot||_{L^2}$ on $f_*\omega_{\CX/\CO}$ is bounded below by a constant related to $r_{z_v}$ and the multipler at $P$ defined as following.
\begin{defn}\label{mul}
Define the multipler $m_P$ of the local corrdinates of $P_\infty$ around $(z_v)_{v|\infty}\in (\BH)_{v|\infty}$ by 
\[m_P:=\prod_{v|\infty}|\pi_v^{*}(t_{P,v})'(z_v)|^{\epsilon_v},\]where $t_P$ is any element in $F(\CX)^\times$ that is a uniformizer of the formal completion of $\CX$ along $P$.
\end{defn}
By the product formula, $m_P$ is well defined.

Let \begin{equation}\label{loccor}c_P:=-\left(\min\log\{1,m_P^{2g-1}\}+\sum_{v|\infty}\frac{\epsilon_v}{2}\log \frac{2\min\{1,r_{z_v}^{4g-3}\}}{4g-3}\right).\end{equation}

We have the following lower bound of minimum of $f_*\omega_{\CX/\CO}$.
 \begin{prop}\label{mainrpt}
Let $(z_v)_{v|\infty}\in (\BH)_{v|\infty}$ be a set of representatives of $P_\infty\in (\CX_v(\BC))_{v|\infty}$, then for any $\omega\in f_*\omega_{\CX/\CO}$, we have
\[\log||\omega||_{L^2}\geq  -\log c_P.\]
\end{prop}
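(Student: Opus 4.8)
The plan is to bound $\|\omega\|_{L^2}$ from below place by place: at each infinite place $v$ I will produce a lower bound for the $L^2$-norm of the holomorphic $1$-form $\omega_v$ that $\omega$ induces on $\CX_v(\BC)$, and then assemble these bounds by applying the product formula to an integral invariant of $\omega$ read off along the section $P$. This is the arithmetic-surface analogue of the classical lower bound for the Petersson norm of a normalized newform, with $P$ in the role of the cusp. We may assume $\omega\neq 0$.

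Fix $v\mid\infty$ and pull $\omega_v$ back along the uniformization $\pi_v\colon\BH\to\CX_v(\BC)$, writing $\pi_v^{*}\omega_v=\phi_v(z)\,dz$ with $\phi_v$ holomorphic on $\BH$. Since $\|\omega_v\|_{L^2}^{2}$ equals $\tfrac1\pi$ times the integral of $|\phi_v|^{2}\,dx\,dy$ over the Dirichlet domain $D_{z_v,\Gamma_v}$, which contains the Euclidean disc $B(r_{z_v},z_v)$, I restrict the integral to that disc and expand $\phi_v$ in a Taylor series $\phi_v(z)=\sum_{n\ge j}c^{(v)}_n(z-z_v)^{n}$ at $z_v$, where $j:=\ord_{z_v}\phi_v$; the cross terms integrate to zero by rotational symmetry, and one obtains
\[
\|\omega_v\|_{L^2}^{2}\ \geq\ \sum_{n\ge j}\frac{|c^{(v)}_n|^{2}\,r_{z_v}^{\,2n+2}}{n+1}\ \geq\ \frac{|c^{(v)}_j|^{2}\,r_{z_v}^{\,2j+2}}{j+1}.
\]

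Next I make the leading coefficient arithmetic. In a neighbourhood of $P$ the relative dualizing sheaf is trivialized by $dt_P$, so $\omega=t_P^{\,j}\,\tilde u\,dt_P$ there, where $\tilde u$ is regular along $P$ and a unit at the generic point of $P$, and $j$ is the multiplicity of $P$ in $\div(\omega)$ — equivalently the order of vanishing of $\omega|_{\CX_F}$ at $P_F$ — so that $0\le j\le 2g-2$ and $j$ is the same at every $v$. Pulling back along $\pi_v$ and setting $\tau_v=\pi_v^{*}t_{P,v}$ gives $\phi_v=\tau_v^{\,j}\,(\pi_v^{*}\tilde u)\,\tau_v'$, hence $|c^{(v)}_j|=|\tau_v'(z_v)|^{\,j+1}\,|\tilde u(P)|_v$ with $\tilde u(P)=P^{*}\tilde u\in\CO\setminus\{0\}$. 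Raising the local inequality to the power $\epsilon_v$, substituting this value of $|c^{(v)}_j|$, and multiplying over $v\mid\infty$, Definition~\ref{mul} gives $\prod_{v\mid\infty}|\tau_v'(z_v)|^{\epsilon_v}=m_P$, while $\prod_{v\mid\infty}|\tilde u(P)|_v^{\epsilon_v}=|\Nm_{F/\BQ}(\tilde u(P))|\ge 1$ since $\tilde u(P)$ is a nonzero element of $\CO$; therefore
\[
\prod_{v\mid\infty}\|\omega_v\|_{L^2}^{\,2\epsilon_v}\ \geq\ m_P^{\,2(j+1)}\prod_{v\mid\infty}\Big(\frac{r_{z_v}^{\,2j+2}}{j+1}\Big)^{\!\epsilon_v}.
\]
Taking logarithms, I use $(j+1)\log m_P\ge\log\min\{1,m_P^{\,2g-1}\}$ together with a lower bound for each $r_{z_v}^{\,2j+2}/(j+1)$ that is uniform over $0\le j\le 2g-2$ and depends only on $r_{z_v}$ and $g$ (immediate from $2j+2\le 4g-2$ and $j+1\le 2g-1$ upon separating the cases $r_{z_v}\ge 1$ and $r_{z_v}<1$), and then re-express $\prod_{v\mid\infty}\|\omega_v\|_{L^2}^{\,2\epsilon_v}$ in terms of $\|\omega\|_{L^2}$ through the definition of the $L^2$-metric on the Hermitian $\CO$-module $f_*\omega_{\CX/\CO}$; this yields the asserted lower bound, with the constant $c_P$ of \eqref{loccor}.

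The main obstacle is the arithmetic step of the third paragraph. One must check that $dt_P$ genuinely trivializes $\omega_{\CX/\CO}$ near $P$, that in $\omega=t_P^{\,j}\tilde u\,dt_P$ the factor $\tilde u$ is regular along the \emph{entire} section $P$ (so that $\tilde u(P)$ is an element of $\CO$) even when $\div(\omega)$ contains vertical fibres, and that the exponent $j$ is dictated by the generic fibre, so that the bound $j\le 2g-2$ holds at all archimedean places simultaneously; one must also reconcile the normalization of $\|\cdot\|_{L^2}$ on $f_*\omega_{\CX/\CO}$ with that of the local norms $\|\omega_v\|_{L^2}$. The remaining ingredients — the Taylor-series estimate over a Euclidean disc inside the Dirichlet domain, the product formula, and the elementary optimization over $j$ — are routine.
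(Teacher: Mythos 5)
Your proposal is correct and follows essentially the same route as the paper: restrict the $L^2$-integral to the Euclidean disc $B(r_{z_v},z_v)$ inside the Dirichlet domain, kill cross terms by rotational symmetry, extract the leading Taylor coefficient, and combine $\prod_{v\mid\infty}|\tau_v'(z_v)|^{\epsilon_v}=m_P$ with the product-formula bound $\prod_{v\mid\infty}|\tilde u(P)|_v^{\epsilon_v}\ge 1$ for the nonzero algebraic integer $\tilde u(P)$ (the paper's $a_r$). One small point in your favour: your disc integral $\tfrac{1}{\pi}\int_{B(R,0)}|z|^{2j}\,dx\,dy=\tfrac{R^{2j+2}}{j+1}$ is the correct value, whereas the paper records $\tfrac{2R^{2r+1}}{2r+1}$ (the one-dimensional integral), which only shifts the explicit constant $c_P$ and not the shape of the result.
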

\begin{proof}
Let $\omega\in f_*\omega_{\CX/\CO}$ and let $r$ be the vanishing order of $\omega$ at $P$, then by Riemann-Roch, one have $0\leq r\leq 2g-2$. Let $t_P\in F(\CX)^\times$ be as in Definition \ref{mul}. We have that around $P$, \[\omega=\sum_{n\geq r}a_n\cdot t_P^n d t_P\] with $a_r\neq 0$ and $a_n\in \CO$ for all $n$.

Fix an infinite place $v$ of $F$.

View $F$ as subfield of $\BC$ via an embedding $\iota_v: F\ra \BC$ corresponding to $v$.
 Write $\varphi_v:=\pi_v^{*}t_P$ as \[\varphi_v(z)=\alpha_v\cdot(z-z_v)+\text{higher order terms}\] around $z_v$.

By definition of $||\cdot||_{L^2}$ on $f_*\omega_{\CX/\CO}$ (cf.~\S\ref{uppI}), we have
 \[\begin{aligned}
  \log||\omega||_{L^2}\geq& \sum_{v|\infty}\frac{\epsilon_v}{2}\log \left(\frac{-1}{2\pi i}\int_{B(r_{z_v},z_v)}\Big|\sum_{n\geq r}\iota_v(a_n)\cdot \varphi_v(z)^n\varphi_v'(z)\Big|^2dz\wedge \ov{dz}\right)\\
=&\sum_{v|\infty}\frac{\epsilon_v}{2}\log\left(\frac{1}{\pi}\int_{B(r_{z_{v}},z_v)}\Big|\iota_v(a_{r})\alpha_v^{r+1}\cdot(z-z_v)^r+\sum_{n\geq r+1}b_{n,v} (z-z_v)^n\Big|^2{dx\wedge dy}\right).
 \end{aligned}\]
 
Note that for $m\neq n$, 
\[\begin{aligned}
   &\int_{B(r_{z_v},z_v)}((z-z_v)^n(\ov{z-z_v})^m+(z-z_v)^m(\ov{z-z_v})^n)dx\wedge dy=0
\end{aligned}
\]
We have  
\[\log||\omega||_{L^2}\geq \sum_{v|\infty}\frac{\epsilon_v}{2}\log\left(\frac{1}{\pi}\int_{B(r_{z_{v}},z_v)}|\iota_v(a_{r})\alpha_v^{r+1}(z-z_v)^r|^2dx\wedge dy\right).\]  
Note that $a_r$ is an algebraic integer, thus we have 
\[\begin{aligned}
 \log||\omega||_{L^2}\geq&  \log m_P^{r+1}+\sum_{v|\infty}\frac{\epsilon_v}{2}\log \frac{2r_{z_v}^{2r+1}}{2r+1},
\end{aligned}
\]the result follows.
\end{proof}
\subsection{Upper bound for $(\omega_{\CX/\CO},\omega_{\CX/\CO})$ II}
Suppose that the  following holds:
\begin{itemize}
  \item [(A)] $\CX_F$ has genus $g>1$ and $\CX$ admits a $\CO$-section $P$.
\end{itemize} 
  Let $\lambda$ be the minimum of $f_*\omega_{\CX/\CO}$ and let \[C_P=12g[F:\BQ]\cdot \begin{cases}
 c_P,&\quad \text{if $\lambda<1$},\\
 0,&\quad \text{otherwise},
\end{cases}
\] where $c_P$ is given by \eqref{loccor}.

\begin{thm}\label{uppself1}
  There exists an explicit constant 
$C=O(g\cdot([F:\BQ]\log g +|D_F|))
$ such that 
\[(\omega_{\CX/\CO},\omega_{\CX/\CO})\leq C+C_{P}\]holds for $\CX$ satisfying assumption (A). 
\end{thm}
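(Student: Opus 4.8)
The plan is to combine Theorem~\ref{main:2} with the lower bound on the minimum of $f_*\omega_{\CX/\CO}$ coming from Proposition~\ref{mainrpt}. Recall that Theorem~\ref{main:2} already produces an explicit constant $C'=O(g\cdot([F:\BQ]\log g+|D_F|))$ with
\[(\omega_{\CX/\CO},\omega_{\CX/\CO})\leq 12[F:\BQ]\sum_{i=1}^{g}(-\log\lambda_i)+C',\]
where $\lambda_1\leq\cdots\leq\lambda_g$ are the successive minima of $f_*\omega_{\CX/\CO}$ with respect to the $L^2$-norm. So the only remaining issue is to bound $\sum_{i=1}^g(-\log\lambda_i)$ from above under assumption (A), which is where the section $P$ enters.

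First I would use the trivial monotonicity $\lambda_1\leq\lambda_i$ for all $i$, hence $-\log\lambda_i\leq -\log\lambda_1=-\log\lambda$ for each $i$, giving
\[\sum_{i=1}^{g}(-\log\lambda_i)\leq g\cdot(-\log\lambda).\]
Next, Proposition~\ref{mainrpt} asserts that every $\omega\in f_*\omega_{\CX/\CO}$ satisfies $\log\|\omega\|_{L^2}\geq -c_P$ (the statement in the excerpt is phrased as $\log\|\omega\|_{L^2}\geq -\log c_P$, but the quantity $c_P$ defined in~\eqref{loccor} is already the relevant logarithmic bound, so the effective inequality to use is $\log\|\omega\|_{L^2}\geq -c_P$); in particular the minimum $\lambda$ satisfies $\log\lambda\geq -c_P$, i.e. $-\log\lambda\leq c_P$. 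Then I split into two cases according to the sign of $\log\lambda$. If $\lambda\geq 1$ then $-\log\lambda\leq 0$, so $\sum_{i=1}^g(-\log\lambda_i)\leq 0$ and the term $12[F:\BQ]\sum_i(-\log\lambda_i)$ in Theorem~\ref{main:2} is nonpositive, hence can be dropped; this matches the case $C_P=0$. If $\lambda<1$ then $0\leq -\log\lambda\leq c_P$, so
\[12[F:\BQ]\sum_{i=1}^{g}(-\log\lambda_i)\leq 12[F:\BQ]\cdot g\cdot(-\log\lambda)\leq 12g[F:\BQ]\,c_P=C_P.\]
Combining the two cases gives $12[F:\BQ]\sum_{i=1}^g(-\log\lambda_i)\leq C_P$ in all cases, and feeding this into Theorem~\ref{main:2} yields $(\omega_{\CX/\CO},\omega_{\CX/\CO})\leq C'+C_P$, which is the claimed inequality with $C=C'$.

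The only genuinely delicate point is verifying that the constant $C$ from Theorem~\ref{main:2} is unchanged and does not secretly acquire a dependence on $P$: it does not, since the passage to the refined bound only replaces the (possibly large) quantity $\sum_i(-\log\lambda_i)$ by the a priori bound $C_P$, leaving the error term $C'=O(g\cdot([F:\BQ]\log g+|D_F|))$ from the Noether formula, Minkowski's second theorem (Lemma~\ref{Min}), the volume estimate (Lemma~\ref{aravol}), and Wilms' lower bound on the Faltings $\delta$-invariant exactly as before. I would also double-check that Proposition~\ref{mainrpt} really applies to \emph{every} lattice vector, so that it bounds $\lambda=\lambda_1$ (the first minimum), which it does since its statement quantifies over all $\omega\in f_*\omega_{\CX/\CO}$. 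No further input is needed; the proof is a two-line assembly once the bookkeeping on the case distinction $\lambda<1$ versus $\lambda\geq 1$ is made explicit.
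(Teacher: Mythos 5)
Your proof is correct and follows essentially the same route as the paper's (terse) proof, which simply cites Theorem~\ref{main:2} and Proposition~\ref{mainrpt}; you supply the bookkeeping (monotonicity of successive minima, the case split on $\lambda\lessgtr 1$, and matching of constants) that the paper leaves implicit. You also correctly identify the misprint in Proposition~\ref{mainrpt}: comparing its proof with the definition of $c_P$ in~\eqref{loccor} shows the intended conclusion is $\log\|\omega\|_{L^2}\geq -c_P$ rather than $-\log c_P$, in agreement with the formulation $\log\lambda\geq -c_P$ given in the introduction.
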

\begin{proof}
By Theorem~\ref{main:2}, it reduces to bound $L^2$-norms of global sections in $f_{*}\omega_{\CX/\CO}$ from below, thus the result follows from Proposition~\ref{mainrpt}.
\end{proof}
 
\end{document}